\def\numberlikeadb{\global\def\theequation{\thesection.\arabic{equation}}}
\newtheorem{theorem}{Theorem}[section]
\newtheorem{lemma}[theorem]{Lemma}
\newtheorem{corollary}[theorem]{Corollary}
\newtheorem{proposition}[theorem]{Proposition}
\newtheorem{remark}[theorem]{Remark}
\numberwithin{equation}{section}
\begin{document} 

\title{A Stein characterisation of the generalized hyperbolic distribution}
\author{Robert E. Gaunt\footnote{School of Mathematics, The University of Manchester, Manchester M13 9PL, UK.
}\\
The University of Manchester and University of Oxford
}

\date{March 2017} 
\maketitle

\begin{abstract}The generalized hyperbolic (GH) distributions form a five parameter family of probability distributions that includes many standard distributions as special or limiting cases, such as the generalized inverse Gaussian distribution, Student's $t$-distribution and the variance-gamma distribution, and thus the normal, gamma and Laplace distributions.  In this paper, we consider the GH distribution in the context of Stein's method.  In particular, we obtain a Stein characterisation of the GH distribution that leads to a Stein equation for the GH distribution.  This Stein equation reduces to the Stein equations from the current literature for the aforementioned distributions that arise as limiting cases of the GH superclass.
\end{abstract}

\noindent{\bf{Keywords:}} Stein's method, generalized hyperbolic distribution, characterisations of probability distributions

\noindent{{{\bf{AMS 2010 Subject Classification:}}} 60F05; 60E05

\section{Introduction}

In 1972, Stein \cite{stein} introduced a powerful technique for deriving distributional bounds for normal approximation.  Stein's method for normal approximation rests on the following characterization of the normal distribution: $W\sim N(0,\sigma^2)$ if and only if 
\begin{equation} \label{stein lemma}\mathbb{E}[\sigma^2f'(W)-Wf(W)]=0
\end{equation}
for all real-valued absolutely continuous functions $f$ such that $\mathbb{E}|f'(Z)|<\infty$ for $Z\sim N(0,\sigma^2)$.  This characterisation leads to the so-called Stein equation: 
\begin{equation} \label{normal equation} \sigma^2f'(x)-xf(x)=h(x)-N h,
\end{equation} 
where $N h$ denotes $\mathbb{E}h(Z)$ for $Z\sim N(0,\sigma^2)$, and the test function $h$ is real-valued.  Evaluating both sides of (\ref{normal equation}) at a random variable $W$ and taking expectations gives
\begin{equation} \label{expect} \mathbb{E}[\sigma^2f'(W)-Wf(W)]=\mathbb{E}h(W)-N h.
\end{equation}
We can thus bound the quantity $|\mathbb{E}h(W)-N h|$ by solving the Stein equation (\ref{normal equation}) and then bounding the left-hand side of (\ref{expect}).  For a detailed account of the method see the monograph \cite{chen} or the review article \cite{ross}.

Over the years, the method has been adapted to many other probability distributions, such as the Poisson \cite{chen 0}, gamma \cite{gaunt chi square, luk}, exponential \cite{chatterjee, pekoz1} and Laplace distribution \cite{dobler, pike}, and has been applied to a wide range of applications, including random matrix theory \cite{mackey}, random graph theory \cite{bhj92}, urn models \cite{dobler beta, goldstein4, pekoz3}, goodness-of-fit statistics \cite{gaunt friedman, gaunt chi square} and statistical physics \cite{cs11, em14}.  For an overview of the current literature see \cite{ley}.  In particular, the method has recently been extended to the variance-gamma distribution; see \cite{gaunt vg} and \cite{eichelsbacher}.  The variance-gamma distributions form a four parameter family of distributions that include as special or limiting cases the normal, gamma and Laplace distributions, as well the product of two (possibly correlated) mean zero normal random variables and the difference of two (possibly correlated) gamma random variables; for a list of the distributions contained in the variance-gamma class see \cite{gaunt vg}.  The variance-gamma distribution is a limiting case of the five parameter generalized hyperbolic (GH) distribution (see \cite{eberlein}), and it is therefore an intriguing prospect to extend Stein's method to this distribution.  This is the focus of this paper.

The GH distribution was introduced by Barndorff-Nielsen \cite{barndorff}, who studied it in connection with the modelling of dune movements.  For certain parameter values the GH distributions have semi-heavy tails which makes them appropriate for financial modelling; see, for example, \cite{bibby, eberlein1, eberlein2, madan}.  The distribution has five parameters $\lambda \in\mathbb{R}$, $\delta >0$, $\mu \in \mathbb{R}$, $\alpha >|\beta| \geq 0$ and its probability density function is 
\begin{align}  p_{GH}(x;\lambda,\alpha,\beta,\delta,\mu) &= \frac{(\alpha^2 - \beta^2)^{\lambda/2}}{\sqrt{2\pi} \alpha^{\lambda -\frac{1}{2}}\delta^{\lambda}K_{\lambda}(\delta\sqrt{\alpha^2-\beta^2})}\mathrm{e}^{\beta (x-\mu )}(\delta^2+(x-\mu)^2)^{(\lambda-\frac{1}{2})/2} \nonumber \\ 
\label{sevenz} &\quad\times K_{\lambda-\frac{1}{2}}(\alpha\sqrt{\delta^2+(x-\mu)^2}), \quad x\in\mathbb{R},
\end{align}
where $K_\nu$ is the modified Bessel function of the second kind (see the Appendix for a definition).  If a random variable $X$ has density (\ref{sevenz}), we write $X\sim GH(\lambda,\alpha,\beta,\delta,\mu)$.  The parameter $\mu$ is the location parameter, and in this paper we shall often set it to 0 to simplify the exposition; results for the general case then follow by a simple linear transformation, because $GH(\lambda,\alpha,\beta,\delta,\mu)\stackrel{\mathcal{D}}{=}\mu+GH(\lambda,\alpha,\beta,\delta,0)$.  The support of the distribution is $\mathbb{R}$, except for certain limiting cases, in which case the support is either a positive or negative half-line. We collect these special cases in Section 2 (see \cite{eberlein} for a thorough treatment of all the limiting cases).  Indeed, the combination of the five parameters of the model and the flexibility of the modified Bessel function $K_\nu$ allow for a wide variety of distributions to fall into the GH family, including the generalized inverse Gaussian (GIG) distribution, Student's $t$-distribution and also the variance-gamma distribution.  We present a number of important special and limiting cases in Section 2.

In extending Stein's method to a new distribution, the first step is typically to find a suitable characterising equation for the distribution (for the normal distribution this is (\ref{stein lemma})), from which we obtain a corresponding Stein equation.  In Proposition \ref{ghds}, we obtain a characterising equation for the $GH(\lambda,\alpha,\beta,\delta,0)$ distribution, which leads to the following Stein equation for the $GH(\lambda,\alpha,\beta,\delta,0)$ distribution:
\begin{align}\mathcal{A}_{\lambda,\alpha,\beta,\delta}f(x)&:=\frac{x^2+\delta^2}{x}f''(x)+\bigg[2\lambda+2\beta x+\frac{2\beta\delta^2}{x}-\frac{\delta^2}{x^2}\bigg]f'(x)\nonumber\\
\label{steinsing}&\quad+\bigg[2\lambda\beta-(\alpha^2-\beta^2)x+\frac{\beta^2\delta^2}{x}-\frac{\beta\delta^2}{x^2}\bigg]f(x)
=h(x)-\mathrm{GH}_{\delta,0}^{\lambda,\alpha,\beta}h,
\end{align}
where $\mathrm{GH}_{\delta,\mu}^{\lambda,\alpha,\beta}h$ denotes $\mathbb{E}h(X)$ for $X\sim GH(\lambda,\alpha,\beta,\delta,\mu)$.  We shall refer to $\mathcal{A}_{\lambda,\alpha,\beta,\delta}$ as the $GH(\lambda,\alpha,\beta,\delta,0)$ Stein operator, in accordance with standard practice from the literature.  This Stein equation has a number of interesting properties.  In particular, in Section 3.2, we see that, for particular parameter values, (\ref{steinsing}) reduces to the known Stein equations for the GIG \cite{koudou}, Student's $t$ \cite{schoutens} and variance-gamma \cite{gaunt vg} distributions.  The only other instance in the literature in which a comparably large class of distributions can be treated in one framework are the Stein equations for Pearson and Ord class of distributions (see \cite{schoutens}).  

The Stein equation (\ref{steinsing}) is a second order differential equation in $f$, $f'$ and $f''$.  Such Stein equations were uncommon in the literature, although recently \cite{gaunt vg, pekoz, pike} have obtained second order Stein equations.  In fact, $n$-th order Stein equations have recently been obtained for the product of $n$ independent beta, gamma and central normal random variables \cite{gaunt pn, gaunt ngb} and general linear combinations of gamma random variables \cite{aaps16}.  

The fact that (\ref{steinsing}) is a second order differential equation in $f$, $f'$ and $f''$ means that the standard generator \cite{barbour2, gotze} and density \cite{stein2} methods (the scope of the density method has recently been extended by \cite{ley}) do not easily lend themselves to deriving this Stein equation. (Note that a direct application of the density method would lead to a first order Stein equation with complicated coefficients involving the modified Bessel function of the second kind, which would most likely be intractable in applications.)  To arrive at our characterisation of the GH distribution that leads to (\ref{steinsing}), we note a second order homogeneous differential equation that the density (\ref{sevenz}) satisfies and then exploit the duality between Stein equations and differential equations of densities to obtain our Stein equation.  This is one the first explicit examples in the literature of this technique being used to derive a Stein equation, but see \cite{ley}, Section 3.6 for a discussion and example of this approach.  

Despite these interesting properties, the presence of singularities at $x=0$ in the Stein equation (\ref{steinsing}) suggests that it may not be easily tractable for proving approximation theorems via Stein's method.  We can remove the singularities by making the substitution $f(x)=x^2g(x)$, which leads to the following alternative Stein equation for the $GH(\lambda,\alpha,\beta,\delta,0)$ distribution:
\begin{align}\label{solnst}&x(x^2+\delta^2)g''(x)+\big(3\delta^2+2\beta\delta^2x+(2\lambda+4)x^2+2\beta x^3\big)g'(x)\nonumber \\
&+\big(3\beta\delta^2+(4\lambda+\beta^2\delta^2+2)x+(2\lambda+4)\beta x^2-(\alpha^2-\beta^2)x^3\big)g(x)=h(x)-\mathrm{GH}_{\delta,0}^{\lambda,\alpha,\beta}h.
\end{align}
At first glance, (\ref{solnst}) seems to be more tractable than (\ref{steinsing}).  However, there is also a problem with working with this Stein equation in applications.  The solution $g$ to (\ref{solnst}) is equal to $f(x)/x^2$, where $f$ is the solution to (\ref{steinsing}), and is given by formula (\ref{ink}) in Proposition \ref{solnlemma}.  It is clear from (\ref{ink}) that $g(x)=f(x)/x^2$  has a singularity at $x=0$ (except possibly for very particular choices of test function $h$).  As was shown by \cite{gaunt normal}, it is sometimes possible to prove approximation theorems by Stein's method when the derivatives of the solution of a Stein equation are unbounded as $|x|\rightarrow\infty$.  Although, the prospect of dealing with a singularity at an interior point seems more challenging and will not be addressed in this paper.  For dealing with the full class, the Stein equations (\ref{steinsing}) and (\ref{solnst}) may therefore not be tractable, and alternative methods may be needed to circumvent the Stein equation, as was done in the recent works \cite{aaps16, gaunt normal}.  It should, however, be noted that for particular parameter values, the Stein equation is highly tractable.  For example, the case $\delta\rightarrow0$ yields the variance-gamma class for which approximation theorems have been obtained in \cite{eichelsbacher, gaunt vg}.  

The rest of the article is organised as follows.  In Section 2, we present some basic properties of the GH distributions and note some important special and limiting cases of this superclass of distributions.  In Section 3, we obtain a characterising equation for the GH distribution that leads to the Stein equation (\ref{steinsing}).  In Lemma \ref{solnprop}, we solve the Stein equation, and in Lemma \ref{solnlemma} we prove that the solution and its first derivative are bounded when the test function $h$ is bounded.  In Section 3.2, we note that the Stein equation (\ref{steinsing}) reduces to the known GIG, Student's $t$ and variance-gamma Stein equations for appropriate parameter values.  The Appendix contains some elementary properties of modified Bessel functions that are used throughout this paper.

\section{The class of generalized hyperbolic distributions}

In this section, we present some basic properties of GH distributions, some of which we shall need in Section 3.  The GH distributions are studied in some detail in \cite{eberlein}, and we refer the reader to this work for further properties.  The class of GH distributions can be obtained by mean-variance mixtures of normal distributions where the mixing distribution is the GIG distribution \cite{bks82}.  Let $GIG(\lambda,a,b)$ denote the GIG distribution with parameters $\lambda\in\mathbb{R}$, $a>0$, $b>0$ and density
\begin{equation}\label{gigpdf}p_{GIG}(x;\lambda,a,b)=\bigg(\frac{a}{b}\bigg)^{\lambda/2}\frac{1}{2K_\lambda(\sqrt{ab})}x^{\lambda-1}\mathrm{e}^{-(ax+bx^{-1})/2}, \quad x>0.
\end{equation}
If $X\sim N(0,1)$ and $V\sim GIG(\lambda,\sqrt{\alpha^2-\beta^2},\delta)$ are independent, then 
\begin{equation*}\mu+\beta V+\sqrt{V}X\sim GH(\lambda,\alpha,\beta,\delta,\mu).
\end{equation*}
The moment generating function of the $GH(\lambda,\alpha,\beta,\delta,\mu)$ distribution is given by
\begin{equation}\label{ghdmgf}M(t)=\frac{\mathrm{e}^{\mu t}\gamma^\lambda}{(\alpha^2-(\beta+t)^2)^{\lambda/2}}\frac{K_\lambda(\delta\sqrt{\alpha^2-(\beta+t)^2})}{K_\lambda(\delta\gamma)},
\end{equation}
for all $t\in\mathbb{R}$ such that $|\beta+t|<\alpha$.  The mean and variance of $X\sim GH(\lambda,\alpha,\beta,\delta,\mu)$ are given by
\begin{align}\label{mean}\mathbb{E}X&=\mu+\frac{\delta\beta K_{\lambda+1}(\delta\gamma)}{\gamma K_\lambda(\delta\gamma)},\\
\label{variance} \mathrm{Var}X&=\frac{\delta K_{\lambda+1}(\delta\gamma)}{\gamma K_\lambda(\delta\gamma)}+\frac{\beta^2\delta^2}{\gamma^2}\bigg(\frac{K_{\lambda+2}(\delta\gamma)}{K_\lambda(\delta\gamma)}-\frac{K_{\lambda+1}^2(\delta\gamma)}{K_\lambda^2(\delta\gamma)}\bigg),
\end{align}
where $\gamma=\sqrt{\alpha^2-\beta^2}$.  In fact, GH distributions possess moments of arbitrary order (except in a limiting case that corresponds to Student's $t$-distribution), and formulas for these moments are given in \cite{scott}.  The class is closed under affine transformations \cite{eberlein}: if $X\sim GH(\lambda,\alpha,\beta,\delta,\mu)$ then $aX+b\sim GH(\lambda,\alpha/|a|,\beta/a,\delta|a|,a\mu+b)$.  However, the class is not closed under convolutions, meaning that the sum of two independent GH distributed random variables is no longer GH distributed.  The exceptions being the normal inverse Gaussian distribution ($\lambda=-1/2$) \cite{eberlein} and the variance-gamma distribution ($\delta\rightarrow0$) \cite{bibby}.

Except for particular limiting cases (which we note below), the density of the GH distribution is bounded across its support.  As is seen in the following lemma (which the author could not find in the literature), the tails of the GH distribution are semi-heavy, which makes the class appropriate for financial modelling.  We shall make use of this lemma in the proof of Proposition \ref{ghds}.

\begin{lemma}\label{lem0}The density $p$ of the $GH(\lambda,\alpha,\beta,\delta,\mu)$ has the following tail behaviour:
\begin{equation}\label{pasy}p(x)\sim \frac{(\alpha^2-\beta^2)^{\lambda/2}}{2(\alpha\delta)^\lambda K_\lambda(\delta\sqrt{\alpha^2-\beta^2})}|x|^{\lambda-1}\mathrm{e}^{-\alpha|x-\mu|+\beta (x-\mu)}\sum_{k=0}^\infty\frac{a_k}{x^k}, \quad |x|\rightarrow\infty,
\end{equation}
where the $a_k$ are constants independent of $x$, and $a_0=1$.  Therefore, $p(x)=O(|x|^{\lambda-1}\mathrm{e}^{-\alpha|x|+\beta x})$ and $p'(x)=O(|x|^{\lambda-1}\mathrm{e}^{-\alpha|x|+\beta x})$, as $|x|\rightarrow\infty$.  
\end{lemma}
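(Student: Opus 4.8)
The plan is to read off the tail behaviour directly from the exact density (\ref{sevenz}) by substituting the large-argument asymptotic expansion of the modified Bessel function $K_{\lambda-1/2}$. By the location-shift property $GH(\lambda,\alpha,\beta,\delta,\mu)\stackrel{\mathcal{D}}{=}\mu+GH(\lambda,\alpha,\beta,\delta,0)$ it suffices to treat the case $\mu=0$ and then replace $x$ by $x-\mu$; I would analyse the right tail $x\to+\infty$ and the left tail $x\to-\infty$ separately, writing $\sqrt{\delta^2+x^2}=|x|\sqrt{1+\delta^2/x^2}$ throughout. The single analytic input is the asymptotic series from the Appendix,
\begin{equation*}
K_\nu(z)\sim\sqrt{\frac{\pi}{2z}}\,\mathrm{e}^{-z}\sum_{k=0}^\infty\frac{c_k(\nu)}{z^k},\qquad z\to\infty,\ c_0(\nu)=1,
\end{equation*}
applied with $\nu=\lambda-\tfrac12$ and $z=\alpha\sqrt{\delta^2+x^2}\to\infty$.

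The main step is to organise the resulting product of asymptotic series. First I would isolate the exponential factor by writing $\mathrm{e}^{-\alpha\sqrt{\delta^2+x^2}}=\mathrm{e}^{-\alpha|x|}\mathrm{e}^{-\alpha(\sqrt{\delta^2+x^2}-|x|)}$ and noting $\sqrt{\delta^2+x^2}-|x|=\delta^2/(2|x|)+O(|x|^{-3})$, so that the correction factor $\mathrm{e}^{-\alpha(\sqrt{\delta^2+x^2}-|x|)}$ is itself an asymptotic power series in $1/|x|$ with constant term $1$. Next, the algebraic prefactors combine cleanly: the density's $(\delta^2+x^2)^{(\lambda-1/2)/2}$ multiplied by the $z^{-1/2}$ from the Bessel asymptotic gives $(\delta^2+x^2)^{(\lambda-1)/2}=|x|^{\lambda-1}(1+\delta^2/x^2)^{(\lambda-1)/2}$, and the last binomial factor again expands as a series in $1/x^2$ with constant term $1$. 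Multiplying together the Bessel series $\sum_k c_k(\lambda-\tfrac12)z^{-k}$, the exponential-correction series and the binomial series produces a single asymptotic series $\sum_k a_k/x^k$ with $a_0=1$, while the surviving exponential is exactly $\mathrm{e}^{\beta x-\alpha|x|}$. A short computation of the constants, using $\frac{1}{\sqrt{2\pi}}\sqrt{\frac{\pi}{2\alpha}}=\frac{1}{2\sqrt{\alpha}}$ and $\alpha^{\lambda-1/2}\sqrt{\alpha}=\alpha^\lambda$, collapses the leading coefficient to $\frac{(\alpha^2-\beta^2)^{\lambda/2}}{2(\alpha\delta)^\lambda K_\lambda(\delta\sqrt{\alpha^2-\beta^2})}$, which is the claimed prefactor in (\ref{pasy}); reinstating $\mu$ gives the stated expansion.

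The two $O$-bounds are then corollaries. Since $\sum_k a_k/x^k\to1$, (\ref{pasy}) immediately gives $p(x)=O(|x|^{\lambda-1}\mathrm{e}^{-\alpha|x|+\beta x})$. For $p'$ I would differentiate the exact density rather than the asymptotic series: the derivative of the prefactor $\mathrm{e}^{\beta x}(\delta^2+x^2)^{(\lambda-1/2)/2}$ produces terms of order at most $|x|^{\lambda-1}\mathrm{e}^{\beta x}$, while the chain rule applied to $K_{\lambda-1/2}(\alpha\sqrt{\delta^2+x^2})$ introduces the factor $\alpha x/\sqrt{\delta^2+x^2}$, bounded by $\alpha$, together with $K_{\lambda-1/2}'$. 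Using the Appendix identity $K_\nu'(z)=-\tfrac12\big(K_{\nu-1}(z)+K_{\nu+1}(z)\big)$ and the fact that $K_{\nu\pm1}$ share the same leading exponential decay $\mathrm{e}^{-z}$ as $K_\nu$, each term carries the same factor $\mathrm{e}^{-\alpha\sqrt{\delta^2+x^2}}$ and hence the same exponential order, giving $p'(x)=O(|x|^{\lambda-1}\mathrm{e}^{-\alpha|x|+\beta x})$.

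I expect the only real obstacle to be bookkeeping: verifying that the product of the three asymptotic series is again a genuine asymptotic series in $1/x$ (the correction factor $\mathrm{e}^{-\alpha(\sqrt{\delta^2+x^2}-|x|)}$ is precisely what generates the odd powers of $1/x$, and strictly one should note that the $a_k$ may differ in sign between the two tails because $|x|=\pm x$), together with confirming the leading constant. No step is conceptually hard once the Bessel asymptotic is in hand.
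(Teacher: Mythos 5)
Your proposal is correct and follows essentially the same route as the paper: substitute the large-argument asymptotic series for $K_{\lambda-1/2}$ into the exact density, expand $\sqrt{\delta^2+(x-\mu)^2}$ about $|x-\mu|$, and combine the resulting series in $1/x$ to identify the leading constant. Your treatment of $p'$ by differentiating the exact density (rather than the asymptotic series, as the paper implicitly does) is if anything slightly more careful, but it is the same argument in substance.
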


\begin{proof}Applying the asymptotic formula (\ref{Ktendinfinity}) to (\ref{sevenz}), we have that
\begin{align*}p(x)&\sim \frac{(\alpha^2 - \beta^2)^{\lambda/2}}{\sqrt{2\pi} \alpha^{\lambda -\frac{1}{2}}\delta^{\lambda}K_{\lambda}(\delta\sqrt{\alpha^2-\beta^2})}\mathrm{e}^{\beta (x-\mu )}(\delta^2+(x-\mu)^2)^{(\lambda-\frac{1}{2})/2} \\
&\quad\times\sqrt{\frac{\pi}{2\alpha\sqrt{\delta^2+(x-\mu)^2}}}\mathrm{e}^{-\alpha\sqrt{\delta^2+(x-\mu)^2}}\sum_{k=0}^\infty\frac{a_k(\lambda-1/2)}{x^k}, \quad |x|\rightarrow\infty,
\end{align*}
where the $a_k(\lambda-1/2)$ are given by (\ref{aknu}).  Using the asymptotic formula
\begin{equation*}\sqrt{\delta^2+(x-\mu)^2}=|x-\mu|\sqrt{1+\delta^2/(x-\mu)^2}\sim |x-\mu|+O(x^{-1}), \quad |x|\rightarrow\infty,
\end{equation*}
now yields (\ref{pasy}).  Note that in general the $a_k$ differ from the $a_k(\lambda-1/2)$ except for $a_0=a_0(\lambda-1/2)=1$, the constant for the leading term in the asymptotic series.  The final assertion regarding the order of $p(x)$ and $p'(x)$ as $|x|\rightarrow\infty$ is immediate from the asymptotic series (\ref{pasy}).
\end{proof} 

The special and limiting cases of the GH distribution were studied in some detail in \cite{eberlein}.  We now record the relevant (non-degenerate) cases.  For derivations of these cases see \cite{eberlein}, although note that they all can be derived directly from the density (\ref{sevenz}) using the properties (\ref{spheress}) -- (\ref{Ktendinfinity}) of the modified Bessel function of the second kind.

\vspace{2mm}

\noindent{\bf{Hyperbolic distribution.}}  This distribution corresponds to the case $\lambda=1$:  
\begin{equation*}p_H(x;\alpha,\beta,\delta,\mu)=\frac{\sqrt{\alpha^2-\beta^2}}{2\alpha\delta K_1(\delta\sqrt{\alpha^2-\beta^2})}\mathrm{e}^{-\alpha\sqrt{\delta^2+(x-\mu)^2}+\beta(x-\mu)}, \quad x\in\mathbb{R}.
\end{equation*}

\vspace{2mm}

\noindent{\bf{Normal-inverse Gaussian distribution.}}  Taking $\lambda=-1/2$ gives 
\begin{equation*}p_{NIG}(x;\alpha,\beta,\delta,\mu)=\frac{\alpha\delta}{\pi\sqrt{\delta^2+(x-\mu)^2}}\mathrm{e}^{\delta\gamma+\beta(x-\mu)}K_1(\alpha\sqrt{\delta^2+(x-\mu)^2}), \quad x\in\mathbb{R}.
\end{equation*}

\vspace{2mm}

\noindent{\bf{GIG distribution}} with parameters $\lambda\in\mathbb{R}$, $a>0$, $b>0$.  Taking $\beta=\alpha-\frac{a}{2}$, $\alpha\rightarrow\infty$, $\delta\rightarrow0$, $\alpha\delta^2\rightarrow b$, $\mu=0$ in the density (\ref{sevenz}) of the $GH(\lambda,\alpha,\beta,\delta,\mu)$ distribution yields the $GIG(\lambda,a,b)$ distribution with density (\ref{gigpdf}).  Note that the support of this distribution is the positive real line.  Also, as noted by \cite{eberlein}, the distribution  $-GIG(\lambda,a,b)$, which has support on the negative real line is in the GH class.  Taking the limit $b\rightarrow 0$ yields the gamma distribution as a limiting case, and the limit $a\rightarrow0$ corresponds to inverse-gamma distribution.  For the gamma case ($b\rightarrow 0$) the distribution has singularity at 0 when $0<\lambda<1$; otherwise, the GIG distribution is bounded across its support.

\vspace{2mm}

\noindent{\bf{Student's $t$-distribution.}}  Letting $\lambda=-\frac{\nu}{2}<0$ and taking the limit $\alpha,\beta\rightarrow0$ gives a scaled and shifted Student's $t$-distribution with $\nu$ degrees of freedom: 
\begin{equation} \label{fancy} p_t(x;\nu,\delta,\mu)=\frac{\Gamma(\frac{\nu+1}{2})}{\sqrt{\pi \delta^2}\Gamma(\frac{\nu}{2})}\bigg(1+\frac{(x-\mu)^2}{\delta^2}\bigg)^{-\frac{1}{2}(\nu+1)}, \quad x\in\mathbb{R}.
\end{equation}
In the case $\delta =\sqrt{\nu}$ the density (\ref{fancy}) is that of Student's $t$-distribution with $\nu$ degrees of freedom, and specialising further to $\nu=1$ yields the Cauchy distribution.

\vspace{2mm}

\noindent{\bf{Variance-gamma distribution.}}  Letting $\nu=\lambda-1/2>-1/2$ and taking the limit $\delta\rightarrow0$ gives the $\mathrm{VG}_2(\nu,\alpha,\beta,\mu)$ distribution (this notation is consistent with that of \cite{gaunt vg}) with density
\begin{equation*}  p_{\mathrm{VG}_2}(x;\nu,\alpha,\beta,\mu) = \frac{(\alpha^2 - \beta^2)^{\nu + 1/2}}{\sqrt{\pi} \Gamma(\nu + \frac{1}{2})}\left(\frac{|x-\mu|}{2\alpha}\right)^{\nu} \mathrm{e}^{\beta (x-\mu)}K_{\nu}(\alpha|x-\mu|), \quad x\in \mathbb{R}.
\end{equation*}
The variance-gamma distribution does itself possess a number of standard probability distributions as special and limiting cases; see \cite{gaunt vg}.  To identify these cases, it is more helpful to work with an alternative parametrisation of the distribution.  Letting  
\begin{equation}\label{vgpar}\nu=\frac{r-1}{2}, \quad \alpha =\frac{\sqrt{\theta^2 +  \sigma^2}}{\sigma^2},\quad \beta =\frac{\theta}{\sigma^2}
\end{equation} 
gives the $\mathrm{VG}_1(r,\theta,\sigma,\mu)$ distribution, which has density
\begin{equation*} p_{\mathrm{VG}_1}(x;r,\theta,\sigma,\mu) = \frac{1}{\sigma\sqrt{\pi} \Gamma(\frac{r}{2})} \mathrm{e}^{\frac{\theta}{\sigma^2} (x-\mu)} \bigg(\frac{|x-\mu|}{2\sqrt{\theta^2 +  \sigma^2}}\bigg)^{\frac{r-1}{2}} K_{\frac{r-1}{2}}\bigg(\frac{\sqrt{\theta^2 + \sigma^2}}{\sigma^2} |x-\mu| \bigg). 
\end{equation*}
The support of the $\mathrm{VG}_1(r,\theta,\sigma,\mu)$ distribution is $\mathbb{R}$ when $\sigma>0$, but in the limit $\sigma\rightarrow 0$ the support is the region $(\mu,\infty)$ if $\theta>0$, and is $(-\infty,\mu)$ if $\theta<0$.  If $\sigma>0,$ the distribution is bounded across its support, except for when $r\leq1$ for which there is a singularity at the point $\mu$ of order $-\log|x-\mu|$ if $r=1$ and of order $|x-\mu|^{r-1}$ if $0<r<1$ (see \cite{gaunt vg}, p$.$ 5).  In \cite{gaunt vg}, Proposition 1.3 it is shown that
\begin{align*}N(\mu,\sigma^2)&\stackrel{\mathcal{D}}{=}\lim_{r\rightarrow\infty}\mathrm{VG}(r,0,\sigma/\sqrt{r},\mu), \\
\Gamma(r,\lambda)&\stackrel{\mathcal{D}}{=}\lim_{\sigma\downarrow0}\mathrm{VG}(2r,(2\lambda)^{-1},\sigma,0), \\
\mathrm{Laplace}(\mu,\sigma)&\stackrel{\mathcal{D}}{=}\mathrm{VG}(2,0,\sigma,\mu),
\end{align*}
and a number of other interesting cases are included, such as the product of two (possibly correlated) mean zero normal random variables and the difference of two (possibly correlated) gamma random variables.  Here, $\mathrm{Laplace}(\mu,\sigma)$ denotes a Laplace distribution with density $\frac{1}{2\sigma}\exp\big(-\frac{|x-\mu|}{\sigma}\big)$, $x\in\mathbb{R}$.  

\section{A Stein equation for the generalized hyperbolic distribution}

\subsection{A Stein characterisation of the generalized hyperbolic distribution}

The following proposition gives a characterisation of the $GH(\lambda,\alpha,\beta,\delta,0)$ distribution which leads to the GH Stein equation (\ref{steinsing}).  We begin by obtaining a differential equation satisfied by the density of the $GH(\lambda,\alpha,\beta,\delta,0)$ distribution and then exploit the duality of Stein equations and differential equations satisfied by densities to prove the proposition.  To simplify the exposition, we take $\mu=0$ throughout this section.  We can deduce results for general case easily, because $GH(\lambda,\alpha,\beta,\delta,\mu)\stackrel{\mathcal{D}}{=}\mu+GH(\lambda,\alpha,\beta,\delta,0)$.

\begin{proposition}\label{ghds}Let $W$ be a real-valued random variable.  Then $W$ follows the  $GH(\lambda,\alpha,\beta,\delta,0)$ distribution if and only if
\begin{align}\label{ezero}\mathbb{E}[\mathcal{A}_{\lambda,\alpha,\beta,\delta}f(W)]=0
\end{align}
for all piecewise twice continuously differentiable function $f:\mathbb{R}\rightarrow\mathbb{R}$ for which $\mathbb{E}|\mathcal{A}_{\lambda,\alpha,\beta,\delta}f(Z)|<\infty$ for $Z\sim GH(\lambda,\alpha,\beta,\delta,0)$, and
\begin{equation}\label{fcond}\lim_{|x|\rightarrow\infty}|x|^\lambda \mathrm{e}^{-\alpha|x|+\beta x}f^{(k)}(x)=0
\end{equation}
for $k=0,1,2$, where $f^{(0)}\equiv f$.
\end{proposition}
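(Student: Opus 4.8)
The plan is to prove both directions of the equivalence by exploiting the duality between the Stein operator $\mathcal{A}_{\lambda,\alpha,\beta,\delta}$ in (\ref{steinsing}) and a second order homogeneous differential equation satisfied by the density $p=p_{GH}(\cdot;\lambda,\alpha,\beta,\delta,0)$ of (\ref{sevenz}). The first step is to establish that ODE. Writing $\nu=\lambda-\tfrac12$ and $r=\sqrt{\delta^2+x^2}$, we have $p(x)=Ce^{\beta x}r^{\nu}K_\nu(\alpha r)$ for a normalising constant $C$. Since $K_\nu$ solves the modified Bessel equation, differentiating $K_\nu(\alpha r)$ and using that equation together with the recurrences for $K_\nu$ recorded in the Appendix to eliminate $K_\nu'$ and $K_\nu''$, a (calculation-heavy but routine) computation shows that on $\mathbb{R}\setminus\{0\}$ the density satisfies the formal adjoint equation $\mathcal{A}^*p=0$, where for $\mathcal{A}f=af''+bf'+cf$ we set $\mathcal{A}^*p=(ap)''-(bp)'+cp$ and $a,b,c$ are the coefficient functions read off from (\ref{steinsing}). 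This is exactly the equation that motivates the form of $\mathcal{A}_{\lambda,\alpha,\beta,\delta}$.

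For the forward implication, suppose $W\sim GH(\lambda,\alpha,\beta,\delta,0)$ and $f$ is admissible. I would write $\mathbb{E}[\mathcal{A}f(W)]=\int_{\mathbb{R}}\mathcal{A}f(x)p(x)\,dx$ and integrate by parts twice on each of $(-\infty,0)$ and $(0,\infty)$ via the Lagrange identity $p\,\mathcal{A}f-f\,\mathcal{A}^*p=\frac{d}{dx}B(f,p)$ with bilinear concomitant $B(f,p)=apf'+\big(bp-(ap)'\big)f$. The bulk integral $\int f\,\mathcal{A}^*p\,dx$ vanishes by the density ODE, so everything reduces to boundary terms. The contributions at $\pm\infty$ vanish because $a(x)\sim x$ and $b(x)\to2\lambda$ while, by Lemma \ref{lem0}, $p(x)$ and $p'(x)$ are $O(|x|^{\lambda-1}\mathrm{e}^{-\alpha|x|+\beta x})$; this is matched precisely by the decay (\ref{fcond}) for $k=0,1$, with the $k=2$ condition ensuring that $\mathcal{A}f\cdot p$ is integrable at infinity. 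The delicate point is the interior singularity at $x=0$, where $a$, $b$ and hence $B$ blow up. Three observations resolve it: (i) the $1/x^2$ terms cancel in the coefficient of $f$ in $B$, so the singular part of $B$ is $\tfrac{\delta^2}{x}\big(pf'+(2\beta p-p')f\big)$; (ii) the hypothesis $\mathbb{E}|\mathcal{A}_{\lambda,\alpha,\beta,\delta}f(Z)|<\infty$ forces the non-integrable part of $\mathcal{A}f$ near $0$, which is a multiple of $f'(0)+\beta f(0)$, to vanish, so $f'(0)+\beta f(0)=0$; and (iii) a short computation gives $p'(0)=\beta p(0)$, whence $pf'+(2\beta p-p')f$ vanishes at $0$. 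Thus $B$ extends continuously across $0$, the two interior boundary contributions cancel, and $\mathbb{E}[\mathcal{A}f(W)]=0$.

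For the converse, suppose (\ref{ezero}) holds for every admissible $f$. Given a bounded test function $h$, I would take $f$ to be the solution of the Stein equation $\mathcal{A}_{\lambda,\alpha,\beta,\delta}f=h-\mathrm{GH}^{\lambda,\alpha,\beta}_{\delta,0}h$ furnished by Lemma \ref{solnprop}; its boundedness together with the derivative bounds of Lemma \ref{solnlemma} (whose derivation is independent of this proposition) shows that $f$ is admissible, in particular that it satisfies (\ref{fcond}) and that $\mathbb{E}|\mathcal{A}_{\lambda,\alpha,\beta,\delta}f(Z)|=\mathbb{E}|h(Z)-\mathrm{GH}^{\lambda,\alpha,\beta}_{\delta,0}h|<\infty$. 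The hypothesis then gives $0=\mathbb{E}[\mathcal{A}f(W)]=\mathbb{E}[h(W)]-\mathrm{GH}^{\lambda,\alpha,\beta}_{\delta,0}h$, so $\mathbb{E}[h(W)]=\mathbb{E}[h(Z)]$ for $Z\sim GH(\lambda,\alpha,\beta,\delta,0)$. Since this holds for every bounded continuous $h$, the laws of $W$ and $Z$ coincide.

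I expect the main obstacle to be the forward direction's treatment of the interior singularity at $x=0$: one must verify the cancellation of the $1/x^2$ contributions to $B$ and then exploit the interplay between the integrability hypothesis and the identity $p'(0)=\beta p(0)$ to see that the singular boundary terms at $0^-$ and $0^+$ cancel rather than diverge. By comparison, the derivation of the density ODE is a lengthy but mechanical Bessel-function manipulation, and the converse is standard once the solution estimates of Lemmas \ref{solnprop}--\ref{solnlemma} are available.
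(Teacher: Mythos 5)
Your proposal follows essentially the same route as the paper's proof: a second--order homogeneous ODE for the density (the paper's Corollary \ref{cor1}, which is exactly your formal adjoint equation $\mathcal{A}^*p=0$), two integrations by parts with the boundary terms at $\pm\infty$ killed by (\ref{fcond}) together with the tail estimates of Lemma \ref{lem0}, and a converse built on the solution formula of Lemma \ref{solnprop} and the bounds of Lemma \ref{solnlemma}. One point where you go beyond the paper: your treatment of the interior singularity at $x=0$ --- the cancellation of the $1/x^2$ terms in the bilinear concomitant, the observation that $\mathbb{E}|\mathcal{A}_{\lambda,\alpha,\beta,\delta}f(Z)|<\infty$ forces $f'(0)+\beta f(0)=0$, and the identity $p'(0)=\beta p(0)$ --- is correct and is a genuine addition, since the paper integrates by parts over all of $\mathbb{R}$ without splitting at $0$ and says nothing about why the singular coefficients cause no harm; your version of the necessity argument is the more complete one (modulo the caveat that for a merely piecewise $C^2$ function with a breakpoint at $0$ the one-sided analysis needs a further word).

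There is, however, a small gap in your converse. Condition (\ref{fcond}) must be verified for $k=2$ as well as $k=0,1$, and Lemma \ref{solnlemma} only bounds $f$ and $f'$; "the derivative bounds of Lemma \ref{solnlemma}" do not by themselves control $f''$. The paper closes this by rearranging the Stein equation (\ref{steinsing}) to express $f''(x)$ in terms of $f(x)$, $f'(x)$ and the bounded test function, from which $f''$ is seen to be bounded as $|x|\rightarrow\infty$; you should insert this step. (Once all three $f^{(k)}$ are bounded at infinity, (\ref{fcond}) follows because $\alpha>|\beta|$ makes $|x|^\lambda\mathrm{e}^{-\alpha|x|+\beta x}\rightarrow 0$.)
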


Before proving Proposition \ref{ghds}, we establish several lemmas.

\begin{lemma}\label{lem1}Suppose $g$ and $h$ are twice differentiable.  Then the general solution to the differential equation
\begin{align}&w''(x)-\bigg(\frac{g''(x)}{g'(x)}-\frac{g'(x)}{g(x)}+\frac{2h'(x)}{h(x)}\bigg)w'(x)
-\bigg(\bigg(\frac{\nu^2}{g(x)^2}+1\bigg)g'(x)^2+\frac{h'(x)g'(x)}{h(x)g(x)}\nonumber\\
\label{de4}&\quad\quad+\frac{h(x)h''(x)-2h'(x)^2}{h(x)^2}-\frac{h'(x)g''(x)}{h(x)g'(x)}\bigg)w(x)=0
\end{align}
is given by $w(x)=Ah(x)K_\nu(g(x))+Bh(x)I_\nu(g(x))$, where $A$ and $B$ are arbitrary real-valued constants.  The modified Bessel functions $K_\nu$ and $I_\nu$ are defined in the Appendix.
\end{lemma}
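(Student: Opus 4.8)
The plan is to reduce the second-order linear ODE (\ref{de4}) to the modified Bessel equation
\[
z^2\phi''(z)+z\phi'(z)-(z^2+\nu^2)\phi(z)=0,
\]
for which $K_\nu$ and $I_\nu$ form a fundamental system of solutions (their Wronskian is nonzero, see the Appendix), and then transform back. Since (\ref{de4}) is a homogeneous linear second-order equation, its solution space is two-dimensional, so it suffices to exhibit two linearly independent solutions: the candidates $h(x)K_\nu(g(x))$ and $h(x)I_\nu(g(x))$ will then span the whole space.

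First I would strip off the factor $h$. Writing $w(x)=h(x)u(x)$ and substituting into (\ref{de4}), using $u=w/h$, $u'=(w'h-wh')/h^2$ and the corresponding expression for $u''$, I expect all the terms involving $h'$ and $h''$ to cancel against those built into the coefficients of (\ref{de4}), leaving the much simpler equation
\[
u''(x)-\Big(\frac{g''(x)}{g'(x)}-\frac{g'(x)}{g(x)}\Big)u'(x)-\Big(1+\frac{\nu^2}{g(x)^2}\Big)g'(x)^2\,u(x)=0.
\]
The point of the seemingly baroque coefficients in (\ref{de4}) is precisely that this cancellation takes place; verifying it is a routine but slightly lengthy computation.

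Next I would perform the change of independent variable $z=g(x)$, that is, seek $u(x)=\phi(g(x))$. Using $u'=\phi'(g)g'$ and $u''=\phi''(g)g'^2+\phi'(g)g''$, the terms $\phi'(g)g''$ cancel, and after dividing by $g'^2$ and multiplying by $g^2$ the displayed equation for $u$ becomes exactly the modified Bessel equation for $\phi$ in the variable $z=g(x)$. Hence $\phi(z)=AK_\nu(z)+BI_\nu(z)$, and unwinding both substitutions yields $w(x)=h(x)\big(AK_\nu(g(x))+BI_\nu(g(x))\big)$.

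Finally I would justify that this is the general solution and not merely a family of solutions. The two operations employed---multiplication by $h$ and composition with $g$---are invertible on any interval where $h(x)\neq0$ and $g'(x)\neq0$, so they set up a linear isomorphism between the solution space of (\ref{de4}) and that of the Bessel equation; since $K_\nu$ and $I_\nu$ are linearly independent, so are $hK_\nu(g)$ and $hI_\nu(g)$, and the two-dimensional solution space is spanned by them. The main obstacle here is organisational rather than conceptual: keeping track of the algebra in the first substitution so that the coefficient of $w$ collapses to the precise form asserted in (\ref{de4}), and being mindful of the intervals on which $g$, $g'$ and $h$ do not vanish, so that both changes of variable are legitimate.
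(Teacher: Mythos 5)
Your proposal is correct and follows essentially the same route as the paper: both arguments link (\ref{de4}) to the modified Bessel equation via the two substitutions $w=hu$ and $z=g(x)$, passing through the identical intermediate equation; the paper merely runs the computation in the forward direction (starting from the Bessel equation and building up to (\ref{de4})) whereas you reduce (\ref{de4}) downwards. Your closing remark on invertibility and linear independence, justifying that the two-parameter family is the \emph{general} solution, is a small point the paper leaves implicit, but it does not change the substance of the argument.
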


\begin{proof}We begin by noting that the solution to the homogeneous ordinary differential equation
\begin{equation}\label{de1}r''(x)+\frac{1}{x}r'(x)-\bigg(1+\frac{\nu^2}{x^2}\bigg)r(x)=0
\end{equation}
is given by $r(x)=AK_{\nu} (x) +BI_{\nu} (x)$ (see (\ref{realfeel})).  The rest of the proof involves using (\ref{de1}) and simple manipulations to deduce that $w(x)=Ah(x)K_\nu(g(x))+Bh(x)I_\nu(g(x))$ solves (\ref{de4}).  We start by using (\ref{de1}) to obtain a differential equation satisfied by $s(x)=AK_\nu(g(x))+BI_\nu(g(x))$.  For $y=g(x)$, we have that $\frac{d}{dy}=\frac{1}{g'(x)}\frac{\mathrm{d}}{\mathrm{d}x}$ and $\frac{\mathrm{d}^2}{\mathrm{d}y^2}=\frac{1}{g'(x)^2}\frac{\mathrm{d}^2}{\mathrm{d}x^2}-\frac{g''(x)}{g'(x)^3}\frac{\mathrm{d}}{\mathrm{d}x}$.  Therefore $s$ satisfies
\begin{equation*}\frac{1}{g'(x)^2}s''(x)-\frac{g''(x)}{g'(x)^3}s'(x)+\frac{1}{g(x)g'(x)}s'(x)-\bigg(1+\frac{\nu^2}{g(x)^2}\bigg)s(x)=0,
\end{equation*}
which on multiplying through by $g'(x)^2$ becomes
\begin{equation}\label{de3}s''(x)-\bigg(\frac{g''(x)}{g'(x)}-\frac{g'(x)}{g(x)}\bigg)s'(x)-\bigg(1+\frac{\nu^2}{g(x)^2}\bigg)g'(x)^2s(x)=0.
\end{equation}
Substituting $w(x)=h(x)s(x)$ into (\ref{de3}), using the product rule for differentiation and simplifying then shows that $w$ does indeed solve (\ref{de4}), which completes the proof.
\end{proof}

\begin{corollary}\label{cor1}The density $p$ of the $GH(\lambda,\alpha,\beta,\delta,0)$ distribution, as given by (\ref{sevenz}), satisfies
\begin{align*}&\frac{x^2+\delta^2}{x}p''(x)+\bigg(-2(\lambda-1)-2\beta x-\frac{2\beta\delta^2}{x}-\frac{\delta^2}{x^2}\bigg)p'(x)\\
&\quad\quad\quad+\bigg(2(\lambda-1)\beta-(\alpha^2-\beta^2)x+\frac{\beta^2\delta^2}{x}+\frac{\beta\delta^2}{x^2}\bigg)p(x)=0.
\end{align*}
\end{corollary}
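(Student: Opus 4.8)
The plan is to recognise the density $p$ of (\ref{sevenz}) (with $\mu=0$) as the special case $A=1$, $B=0$ of the general solution $w(x)=Ah(x)K_\nu(g(x))+Bh(x)I_\nu(g(x))$ supplied by Lemma \ref{lem1}, and then simply read off the differential equation it must satisfy. Comparing (\ref{sevenz}) with this form, I would take $\nu=\lambda-\tfrac12$, $g(x)=\alpha\sqrt{\delta^2+x^2}$ and $h(x)=C\mathrm{e}^{\beta x}(\delta^2+x^2)^{(\lambda-1/2)/2}$, where $C$ is the normalising constant in (\ref{sevenz}). Since the density involves only the $K_\nu$ factor and not $I_\nu$, the relevant solution is the one with $B=0$; and because equation (\ref{de4}) is linear and homogeneous, the constant $C$ is irrelevant and may be absorbed into $h$. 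Thus $p$ solves (\ref{de4}) for these particular choices of $\nu$, $g$ and $h$, and it only remains to make the coefficients of (\ref{de4}) explicit for this data.

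The next step is to compute the elementary derivatives
\[
g'(x)=\frac{\alpha x}{\sqrt{\delta^2+x^2}},\qquad g''(x)=\frac{\alpha\delta^2}{(\delta^2+x^2)^{3/2}},\qquad \frac{h'(x)}{h(x)}=\beta+\frac{(\lambda-\tfrac12)x}{\delta^2+x^2},
\]
together with $h''/h$ (obtained by differentiating $h'/h$ once more), and to substitute these into the two coefficient expressions of (\ref{de4}). To match the form stated in the corollary I would multiply the resulting equation through by $(x^2+\delta^2)/x$, which clears the denominator in front of $w''$ and produces the leading term $\frac{x^2+\delta^2}{x}p''(x)$. A short calculation already confirms that the coefficient of $w'$, namely $-\frac{x^2+\delta^2}{x}\big(\frac{g''}{g'}-\frac{g'}{g}+\frac{2h'}{h}\big)$, collapses to $-2(\lambda-1)-2\beta x-\frac{2\beta\delta^2}{x}-\frac{\delta^2}{x^2}$, exactly the coefficient of $p'$ in the corollary.

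The main obstacle is the simplification of the coefficient of $w$, the bulky expression $\big(\tfrac{\nu^2}{g^2}+1\big)(g')^2+\tfrac{h'g'}{hg}+\tfrac{hh''-2(h')^2}{h^2}-\tfrac{h'g''}{hg'}$. After multiplication by $-(x^2+\delta^2)/x$ this must be shown to equal $2(\lambda-1)\beta-(\alpha^2-\beta^2)x+\frac{\beta^2\delta^2}{x}+\frac{2\beta\delta^2}{x^2}$. It helps to note first that $h''/h=(h'/h)'+(h'/h)^2$, so that $\big(hh''-2(h')^2\big)/h^2$ simplifies to $(h'/h)'-(h'/h)^2$; in particular the pure $\beta^2$ contribution to the whole expression reduces to $-\beta^2$, which together with the $+1$ inside the first bracket (which contributes $-\alpha^2x$ after multiplication) produces the term $-(\alpha^2-\beta^2)x$. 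The remaining labour is to gather the constant term and the terms in $1/x$ and $1/x^2$, checking that they assemble into $2(\lambda-1)\beta+\frac{\beta^2\delta^2}{x}+\frac{2\beta\delta^2}{x^2}$. Keeping careful track of these cancellations, especially among the inverse powers of $x$, is the only delicate part; once they are assembled the stated equation follows. Since the homogeneity of (\ref{de4}) renders the normalising constant $C$ irrelevant, the argument applies verbatim to the density $p$, completing the verification.
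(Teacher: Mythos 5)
Your route is precisely the paper's: the published proof is the single sentence ``apply Lemma \ref{lem1} with $B=0$, $g(x)=\alpha\sqrt{\delta^2+x^2}$ and $h(x)=\mathrm{e}^{\beta x}(\delta^2+x^2)^{(\lambda-1/2)/2}$, then multiply through by $(x^2+\delta^2)/x$'', with the computations omitted; and your identification of $\nu=\lambda-\tfrac{1}{2}$, $g$ and $h$, the observation that the normalising constant is irrelevant by homogeneity, the derivative formulas, and the verification of the $p'$-coefficient are all correct. The one place where the proposal would come unstuck is exactly the step you defer as ``the remaining labour'': gathering the constant, $1/x$ and $1/x^2$ terms in the $w$-coefficient actually yields
\[
2(\lambda-1)\beta-(\alpha^2-\beta^2)x+\frac{\beta^2\delta^2}{x}+\frac{\beta\delta^2}{x^2},
\]
that is, with $\beta\delta^2/x^2$ rather than the $2\beta\delta^2/x^2$ printed in the corollary and asserted in your closing sentence. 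You can confirm this quickly in the case $\lambda=1$, where by (\ref{sphere}) the density collapses to $p(x)\propto\mathrm{e}^{\beta x-\alpha\sqrt{\delta^2+x^2}}$ and the claimed identity can be checked by hand: writing the $x^{-2}$ coefficient as $c\beta\delta^2/x^2$, every term cancels except a residue $(c-1)\beta\delta^2x^{-2}p(x)$, forcing $c=1$. The discrepancy is therefore a typo in the statement of the corollary (the same $2\beta\delta^2/x^2$ appears in the displayed expression for $A''(x)-B'(x)+C(x)$ in the proof of Proposition \ref{ghds}, whereas computing $A''-B'+C$ from the Stein operator also gives $\beta\delta^2/x^2$); it does not invalidate your method, but you should not certify that the terms ``assemble into'' the printed coefficient without carrying out the computation, because they do not.
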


\begin{proof}Apply Lemma \ref{lem1} (taking $B=0$ and $A\not=0$) with $g(x)=\alpha\sqrt{\delta^2+x^2}$ and $h(x)=\mathrm{e}^{\beta x}(\delta^2+x^2)^{(\lambda-1/2)/2}$, and then multiply through by $\frac{x^2+\delta^2}{x}$.  We omit the tedious calculations.
\end{proof}

\begin{lemma}\label{solnprop}Let $h:\mathbb{R}\rightarrow\mathbb{R}$ be measurable. Then the function $f:\mathbb{R}\rightarrow\mathbb{R}$, as given by
\begin{align}f(x)&=-\frac{\mathrm{e}^{-\beta x}K_{\nu}(\alpha\sqrt{\delta^2+x^2})}{(\delta^2+x^2)^{\nu/2}}\int_0^x \mathrm{e}^{\beta t}(\delta^2+t^2)^{\nu/2}I_{\nu}(\alpha\sqrt{t^2+\delta^2})\tilde{h}(t)\,\mathrm{d}t \nonumber\\
\label{ink}&\quad-\frac{\mathrm{e}^{-\beta x}I_{\nu}(\alpha\sqrt{\delta^2+x^2})}{(\delta^2+x^2)^{\nu/2}}\int_x^{\infty} \mathrm{e}^{\beta t}(\delta^2+t^2)^{\nu/2}K_{\nu}(\alpha\sqrt{t^2+\delta^2})\tilde{h}(t)\,\mathrm{d}t \\
&=-\frac{\mathrm{e}^{-\beta x}K_{\nu}(\alpha\sqrt{\delta^2+x^2})}{(\delta^2+x^2)^{\nu/2}}\int_0^x \mathrm{e}^{\beta t}(\delta^2+t^2)^{\nu/2}I_{\nu}(\alpha\sqrt{t^2+\delta^2})\tilde{h}(t)\,\mathrm{d}t \nonumber\\
\label{pen}&\quad+\frac{\mathrm{e}^{-\beta x}I_{\nu}(\alpha\sqrt{\delta^2+x^2})}{(\delta^2+x^2)^{\nu/2}}\int_{-\infty}^x \mathrm{e}^{\beta t}(\delta^2+t^2)^{\nu/2}K_{\nu}(\alpha\sqrt{t^2+\delta^2})\tilde{h}(t)\,\mathrm{d}t,
\end{align}
solves the $GH(\lambda,\alpha,\beta,\delta,0)$ Stein equation (\ref{steinsing}), where $\tilde{h}(x)=h(x)-\mathrm{GH}_{\delta,0}^{\nu,\alpha,\beta}h$ and $\nu=\lambda-1/2$.  Notice that the signs and range of integration in the final terms of (\ref{ink}) and (\ref{pen}) differ.
\end{lemma}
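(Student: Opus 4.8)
The function displayed in (\ref{ink}) is the variation-of-parameters (Green's-function) solution of the second-order equation $\mathcal{A}_{\lambda,\alpha,\beta,\delta}f=\tilde h$, assembled from the fundamental pair
\[
u_1(x)=\frac{\mathrm{e}^{-\beta x}K_{\nu}(g(x))}{(\delta^2+x^2)^{\nu/2}},\qquad u_2(x)=\frac{\mathrm{e}^{-\beta x}I_{\nu}(g(x))}{(\delta^2+x^2)^{\nu/2}},\qquad g(x)=\alpha\sqrt{\delta^2+x^2}.
\]
So the plan has three parts: verify that $u_1,u_2$ span the homogeneous solution space of $\mathcal{A}_{\lambda,\alpha,\beta,\delta}f=0$; verify $\mathcal{A}_{\lambda,\alpha,\beta,\delta}f=\tilde h$ by differentiating the formula twice; and reconcile the representation (\ref{ink}) with (\ref{pen}).

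For the first part I would apply Lemma \ref{lem1} with $g(x)=\alpha\sqrt{\delta^2+x^2}$ and $h(x)=\mathrm{e}^{-\beta x}(\delta^2+x^2)^{-\nu/2}$, the reciprocal of the weight used in Corollary \ref{cor1}. Lemma \ref{lem1} then furnishes $Au_1+Bu_2$ as the general solution of (\ref{de4}), and multiplying (\ref{de4}) through by $\frac{x^2+\delta^2}{x}$ reproduces the homogeneous Stein operator, so that $\mathcal{A}_{\lambda,\alpha,\beta,\delta}u_1=\mathcal{A}_{\lambda,\alpha,\beta,\delta}u_2=0$. This is the same mechanical coefficient matching already carried out for the density in Corollary \ref{cor1}, and I would dispatch it the same way.

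The heart of the argument is the second part. Writing $f=-u_1P-u_2Q$ with $P(x)=\int_0^x v_2(t)\tilde h(t)\,\mathrm{d}t$ and $Q(x)=\int_x^{\infty} v_1(t)\tilde h(t)\,\mathrm{d}t$, where $v_1(t)=\mathrm{e}^{\beta t}(\delta^2+t^2)^{\nu/2}K_{\nu}(g(t))$ and $v_2(t)=\mathrm{e}^{\beta t}(\delta^2+t^2)^{\nu/2}I_{\nu}(g(t))$, the reciprocal weights make the products $u_iv_j$ collapse to products of Bessel functions. Differentiating once, the terms carrying $\tilde h$ have coefficient $u_2v_1-u_1v_2=I_{\nu}(g)K_{\nu}(g)-K_{\nu}(g)I_{\nu}(g)=0$, so $f'=-u_1'P-u_2'Q$. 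Differentiating again, the coefficient of $\tilde h$ in $f''$ is $u_2'v_1-u_1'v_2=g'(x)/g(x)=x/(\delta^2+x^2)$, by the Wronskian identity $K_{\nu}(z)I_{\nu}'(z)-K_{\nu}'(z)I_{\nu}(z)=1/z$ recorded in the Appendix. Feeding $f,f',f''$ into the operator gives $\mathcal{A}_{\lambda,\alpha,\beta,\delta}f=\tilde h-P\,\mathcal{A}_{\lambda,\alpha,\beta,\delta}u_1-Q\,\mathcal{A}_{\lambda,\alpha,\beta,\delta}u_2$, where the surviving inhomogeneous term is $\frac{x^2+\delta^2}{x}\cdot\frac{x}{\delta^2+x^2}\,\tilde h=\tilde h$; the remaining two brackets vanish by the first part, so $\mathcal{A}_{\lambda,\alpha,\beta,\delta}f=\tilde h$ for all $x\neq 0$.

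Finally, the equivalence of (\ref{ink}) and (\ref{pen}) is clean: the two expressions differ only by $u_2(x)\int_{-\infty}^{\infty}v_1(t)\tilde h(t)\,\mathrm{d}t$, and since $v_1$ is a constant multiple of the $GH(\lambda,\alpha,\beta,\delta,0)$ density (\ref{sevenz}), this integral is proportional to $\mathbb{E}[\tilde h(X)]=\mathbb{E}h(X)-\mathrm{GH}_{\delta,0}^{\lambda,\alpha,\beta}h=0$. I expect the main obstacle to be bookkeeping rather than ideas: the lengthy coefficient matching in the first part, and, more substantively, justifying that $f$ is well defined and twice differentiable, i.e. that $\int_x^{\infty}v_1\tilde h$ converges and that $u_1P$ and $u_2Q$ stay finite as $|x|\to\infty$. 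For this I would use the large-argument asymptotics of $I_{\nu}$ and $K_{\nu}$ from the Appendix together with the tail bounds of Lemma \ref{lem0}, which control the exponential growth of $u_2$ against the decay of $Q$; the differentiation in the second part, though it looks heavy, collapses entirely through the two Bessel Wronskian relations.
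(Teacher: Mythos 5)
Your proposal is correct and follows essentially the same route as the paper: both obtain the fundamental pair $w_1=\mathrm{e}^{-\beta x}(\delta^2+x^2)^{-\nu/2}K_\nu(\alpha\sqrt{\delta^2+x^2})$, $w_2=\mathrm{e}^{-\beta x}(\delta^2+x^2)^{-\nu/2}I_\nu(\alpha\sqrt{\delta^2+x^2})$ from Lemma \ref{lem1}, use the Wronskian identity $K_\nu(z)I_\nu'(z)-K_\nu'(z)I_\nu(z)=1/z$ to produce the coefficient $x/(\delta^2+x^2)$ that cancels against $(x^2+\delta^2)/x$, and prove the equality of (\ref{ink}) and (\ref{pen}) by noting that $\mathrm{e}^{\beta t}(\delta^2+t^2)^{\nu/2}K_\nu(\alpha\sqrt{\delta^2+t^2})$ is proportional to the $GH(\lambda,\alpha,\beta,\delta,0)$ density so the full-line integral of it against $\tilde{h}$ vanishes. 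The only cosmetic difference is that the paper invokes variation of parameters as a packaged method to assemble the solution, whereas you verify the resulting formula by direct differentiation, which amounts to unpacking the same argument.
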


\begin{proof}Since (\ref{steinsing}) is an inhomogeneous linear ordinary differential equation, we can use the method of variation of parameters (see \cite{collins} for an account of the method) to write down the general solution of (\ref{steinsing}):
\begin{equation}\label{vargensoln}f(x)=-w_1(x)\int_a^x\frac{tw_2(t)\tilde{h}(t)}{(\delta^2+t^2)W(t)}\,\mathrm{d}t+w_2(x)\int_b^x\frac{tw_1(t)\tilde{h}(t)}{(\delta^2+t^2)W(t)}\,\mathrm{d}t,
\end{equation}
where $w_1$ and $w_2$ are solutions to the homogeneous equation
\begin{align}\label{de5}&\frac{x^2+\delta^2}{x}w''(x)+\bigg[2\lambda+2\beta x+\frac{2\beta\delta^2}{x}-\frac{\delta^2}{x^2}\bigg]w'(x)\nonumber\\
&\quad+\bigg[2\lambda\beta-(\alpha^2-\beta^2)x+\frac{\beta^2\delta^2}{x}-\frac{\beta\delta^2}{x^2}\bigg]w(x)=0
\end{align}
and $a$ and $b$ are arbitrary constants, and $W(t)=W(w_1,w_2)=w_1w_2'-w_2w_1'$ is the Wronskian.

A simple application of Lemma \ref{lem1} gives that a pair of linearly independent solutions to (\ref{de5}) is given by
\begin{equation*}w_1(x)=\frac{\mathrm{e}^{-\beta x}K_{\nu}(\alpha\sqrt{\delta^2+x^2})}{(\delta^2+x^2)^{\nu/2}}, \quad w_2(x)=\frac{\mathrm{e}^{-\beta x}I_{\nu}(\alpha\sqrt{\delta^2+x^2})}{(\delta^2+x^2)^{\nu/2}}.
\end{equation*}
We now compute the Wronskian $W(w_1(x),w_2(x))$.  For differentiable functions $u$ and $v$, we have that
\begin{align*}W(u(x)K_\nu(v(x)),u(x)I_\nu(v(x)))&=u(x)^2v'(x)[K_\nu(v(x))I_\nu'(v(x))-K_\nu'(v(x))I_\nu(v(x))]\\
&=\frac{u(x)^2v'(x)}{v(x)},
\end{align*}
where we used the formula $W(K_\nu(x),I_\nu(x))=x^{-1}$ (\cite{olver}, formula 10.28.2) to obtain the final equality.  A simple calculation now gives that
\begin{equation*}W(w_1(x),w_2(x))=\frac{\mathrm{e}^{-2\beta x}}{x(\delta^2+x^2)^{\nu-1}}.
\end{equation*}
Substituting the relevant quantities into (\ref{vargensoln}) and taking $a=0$ and $b=\infty$ yields the solution (\ref{ink}).  That the solutions (\ref{ink}) and (\ref{pen}) are equal follows because $\mathrm{e}^{\beta t}(\delta^2+t^2)^{\nu/2}K_{\nu}(\alpha\sqrt{t^2+\delta^2})$ is proportional to the $GH(\lambda,\alpha,\beta,\delta,0)$ density function.
\end{proof}

\begin{lemma}\label{solnlemma}Suppose that $h:\mathbb{R}\rightarrow\mathbb{R}$ is bounded and let $f$ denote the solution (\ref{ink}).  Then there exist non-negative constants $C_0$ and $C_1$ such that $\|f\|\leq C_0\|\tilde{h}\|$ and $\|f'\|\leq C_1\|\tilde{h}\|$, where $\|\tilde{h}\|=\|\tilde{h}\|_\infty=\sup_{x\in\mathbb{R}}|\tilde{h}(x)|$.
\end{lemma}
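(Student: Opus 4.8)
The plan is to bound $f$ and $f'$ directly from the explicit formula (\ref{ink}), using that at each infinity one of the two homogeneous solutions decays while the other grows. Write $\nu=\lambda-1/2$, let $w_1(x)=\mathrm{e}^{-\beta x}K_\nu(\alpha\sqrt{\delta^2+x^2})(\delta^2+x^2)^{-\nu/2}$ and $w_2(x)=\mathrm{e}^{-\beta x}I_\nu(\alpha\sqrt{\delta^2+x^2})(\delta^2+x^2)^{-\nu/2}$ be the solutions from Lemma \ref{solnprop}, and set $P(t)=\mathrm{e}^{\beta t}(\delta^2+t^2)^{\nu/2}K_\nu(\alpha\sqrt{\delta^2+t^2})$ and $Q(t)=\mathrm{e}^{\beta t}(\delta^2+t^2)^{\nu/2}I_\nu(\alpha\sqrt{\delta^2+t^2})$ for the two integrands, so that (\ref{ink}) reads $f(x)=-w_1(x)\int_0^x Q(t)\tilde h(t)\,\mathrm{d}t-w_2(x)\int_x^\infty P(t)\tilde h(t)\,\mathrm{d}t$. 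Since $h$ is bounded, so is $\tilde h$, and it therefore suffices to show that the two products $|w_1(x)|\,\big|\int_0^x|Q(t)|\,\mathrm{d}t\big|$ and $|w_2(x)|\int_x^\infty|P(t)|\,\mathrm{d}t$ are bounded uniformly in $x$. From the asymptotic expansions of $K_\nu$ and $I_\nu$ recorded in the Appendix one reads off that, as $|x|\to\infty$, $w_1(x)=O(|x|^{-\nu-1/2}\mathrm{e}^{-\alpha|x|-\beta x})$ decays and $w_2(x)=O(|x|^{-\nu-1/2}\mathrm{e}^{\alpha|x|-\beta x})$ grows, while $P$ and $Q$ carry the reciprocal exponential orders to $w_2$ and $w_1$ respectively.

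For the sup-norm bound I would split according to the sign of $x$, using representation (\ref{ink}) for $x\ge0$ and (\ref{pen}) for $x\le0$. On each half-line the decaying solution is paired with an integral dominated by its finite endpoint, and the growing solution is paired with an integral over a tail on which the integrand decays; a Laplace-type (Watson's lemma) estimate of each tail integral shows it is asymptotic to the integrand at the endpoint divided by the relevant exponential rate, so the exponential growth of $w_2$ (resp.\ the exponential decay produced by integrating $Q$) is exactly cancelled and each product is $O(|x|^{-1})$ as $|x|\to\infty$. Because $\delta>0$, the functions $w_1,w_2,P,Q$ are continuous with no singularity at the origin, so both products are also bounded on compact sets; combining these gives $\|f\|\le C_0\|\tilde h\|$. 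The essential point, and the reason the split into the two representations is legitimate, is the mean-zero property $\int_{-\infty}^\infty P(t)\tilde h(t)\,\mathrm{d}t=0$, which is precisely what makes (\ref{ink}) and (\ref{pen}) agree and which lets us always pair the growing solution with a decaying tail integral.

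For the derivative bound, differentiating $f(x)=-w_1(x)A_1(x)-w_2(x)A_2(x)$ with $A_1(x)=\int_0^x Q\tilde h\,\mathrm{d}t$ and $A_2(x)=\int_x^\infty P\tilde h\,\mathrm{d}t$ gives $f'(x)=-w_1'(x)A_1(x)-w_2'(x)A_2(x)+(w_2(x)P(x)-w_1(x)Q(x))\tilde h(x)$. The last term vanishes identically, since $w_1 Q=K_\nu(\alpha\sqrt{\delta^2+x^2})I_\nu(\alpha\sqrt{\delta^2+x^2})=w_2 P$, so $f'=-w_1'A_1-w_2'A_2$ has exactly the same structure as $f$ with $w_1,w_2$ replaced by $w_1',w_2'$. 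The Bessel recurrences for $K_\nu'$ and $I_\nu'$ (see the Appendix) show that $w_1'$ and $w_2'$ have the same leading exponential orders as $w_1$ and $w_2$, so the estimates of the previous paragraph apply verbatim and yield $\|f'\|\le C_1\|\tilde h\|$. The main obstacle throughout is controlling the contribution of the growing solution $w_2$ (and $w_2'$): this is handled only by combining the mean-zero property of $\tilde h$ with the precise exponential cancellation between $w_2$ and the tail integral of $P$.
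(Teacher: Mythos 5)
Your proof is correct and follows essentially the same route as the paper: bound the two products of homogeneous solution times integral using the asymptotic expansions of $K_\nu$ and $I_\nu$, switch between the representations (\ref{ink}) and (\ref{pen}) on the two half-lines (justified by $P$ being proportional to the GH density, i.e.\ the mean-zero property), and observe that $f'$ has the same structure because the terms $w_1Q\tilde h$ and $w_2P\tilde h$ cancel. Your write-up is in fact more explicit than the paper's about the Laplace-type tail estimates and the exponential cancellation, which the paper leaves implicit.
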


\begin{proof}The equality between solutions (\ref{ink}) and (\ref{pen}) means that we can restrict our attention to the case $x\geq0$ provided that we deal with the cases of positive and negative $\beta$.  As $h$ is bounded, we have from (\ref{ink}) that, for $x\geq0$,
\begin{align*}
|f(x)|&\leq\|\tilde{h}\|\frac{\mathrm{e}^{-\beta x}K_{\nu}(\alpha\sqrt{\delta^2+x^2})}{(\delta^2+x^2)^{\nu/2}}\int_0^x \mathrm{e}^{\beta t}(\delta^2+t^2)^{\nu/2}I_{\nu}(\alpha\sqrt{t^2+\delta^2})\,\mathrm{d}t \\
&\quad+\|\tilde{h}\|\frac{\mathrm{e}^{-\beta x}I_{\nu}(\alpha\sqrt{\delta^2+x^2})}{(\delta^2+x^2)^{\nu/2}}\int_x^{\infty} \mathrm{e}^{\beta t}(\delta^2+t^2)^{\nu/2}K_{\nu}(\alpha\sqrt{t^2+\delta^2}))\,\mathrm{d}t.
\end{align*}
The solution $f$ is clearly bounded for all finite $x$.   Also, from the asymptotic formulas (\ref{Ktendinfinity}) and (\ref{roots}) for $K_\nu$ and $I_\nu$, it follows that $f$ is bounded as $x\rightarrow\infty$.  Let us now show that $f'$ is also bounded for all $x\in\mathbb{R}$.  Differentiating (\ref{ink}) and using that $h$ is bounded gives that, for $x\geq0$,
\begin{align*}|f'(x)|&\leq\|\tilde{h}\|\bigg|\frac{\mathrm{d}}{\mathrm{d}x}\bigg(\frac{\mathrm{e}^{-\beta x}K_{\nu}(\alpha\sqrt{\delta^2+x^2})}{(\delta^2+x^2)^{\nu/2}}\bigg)\bigg|\int_0^x \mathrm{e}^{\beta t}(\delta^2+t^2)^{\nu/2}I_{\nu}(\alpha\sqrt{t^2+\delta^2})\,\mathrm{d}t \\
&\quad+\|\tilde{h}\|\bigg|\frac{\mathrm{d}}{\mathrm{d}x}\bigg(\frac{\mathrm{e}^{-\beta x}I_{\nu}(\alpha\sqrt{\delta^2+x^2})}{(\delta^2+x^2)^{\nu/2}}\bigg)\bigg|\int_x^{\infty} \mathrm{e}^{\beta t}(\delta^2+t^2)^{\nu/2}K_{\nu}(\alpha\sqrt{t^2+\delta^2})\,\mathrm{d}t.
\end{align*}
Differentiating using the product rule, the formulas $\frac{\mathrm{d}}{\mathrm{d}x}\big(\frac{K_{\nu}(x)}{x^\nu}\big)=-\frac{K_{\nu+1}(x)}{x^\nu}$ and $\frac{\mathrm{d}}{\mathrm{d}x}\big(\frac{I_{\nu}(x)}{x^\nu}\big)=\frac{I_{\nu+1}(x)}{x^\nu}$ (\cite{olver}, formula 10.29.4) and the chain rule gives that
\begin{align*}\frac{\mathrm{d}}{\mathrm{d}x}\bigg(\frac{\mathrm{e}^{-\beta x}K_{\nu}(\alpha\sqrt{\delta^2+x^2})}{(\delta^2+x^2)^{\nu/2}}\bigg)&=-\mathrm{e}^{-\beta x}\bigg(\frac{K_{\nu}(\alpha\sqrt{\delta^2+x^2})}{(\delta^2+x^2)^{\nu/2}}+\alpha\frac{ K_{\nu+1}(\alpha\sqrt{\delta^2+x^2})}{(\delta^2+x^2)^{\nu/2+1/2}}\bigg), \\
\frac{\mathrm{d}}{\mathrm{d}x}\bigg(\frac{\mathrm{e}^{-\beta x}I_{\nu}(\alpha\sqrt{\delta^2+x^2})}{(\delta^2+x^2)^{\nu/2}}\bigg)&=-\mathrm{e}^{-\beta x}\bigg(\frac{I_{\nu}(\alpha\sqrt{\delta^2+x^2})}{(\delta^2+x^2)^{\nu/2}}-\alpha\frac{ I_{\nu+1}(\alpha\sqrt{\delta^2+x^2})}{(\delta^2+x^2)^{\nu/2+1/2}}\bigg).
\end{align*}
Therefore $f'$ is bounded for all finite $x$ and the asymptotic formulas (\ref{Ktendinfinity}) and (\ref{roots}) imply that it is bounded in the limit $x\rightarrow\infty$.  Since we dealt with the cases of both positive and negative $\beta$, it sufficed to prove boundedness for $x\geq0$, and the proof is thus complete. 
\end{proof}

\begin{remark}Looking back at the proof of Lemma \ref{solnprop}, due to the asymptotic behaviour of $I_\nu(x)$ as $x\rightarrow\infty$ (see (\ref{roots})), to ensure the solution is bounded as $x\rightarrow\infty$, we were forced to take $b=\infty$.  After doing so, we can take $a$ to be any finite real number and still obtain a bounded solution.  Therefore there are infinitely many bounded solutions to (\ref{steinsing}).  Choosing $a=0$ does however seem natural, and in fact on setting $\nu=\lambda-1/2$ and taking the limit $\delta\rightarrow0$ we recover the solution of the $\mathrm{VG}_1(\nu,\alpha,\beta,0)$ Stein equation that was given in Lemma 3.3 \cite{gaunt vg}.  That solution is in fact the unique bounded solution to $\mathrm{VG}_1(\nu,\alpha,\beta,0)$ Stein equation, provided $\nu\geq0$.
\end{remark}

\begin{remark}We shall use Lemma \ref{solnlemma} in our proof of Proposition \ref{ghds}.  In proving that proposition, we only need to make use of the fact that here exist non-negative constants $C_0$ and $C_1$ such that $\|f\|\leq C_0\|\tilde{h}\|$ and $\|f'\|\leq C_1\|\tilde{h}\|$; we do not need to find $C_0$ and $C_1$.  To determine explicit bounds for $f$, $f'$ and higher order derivatives of $f$, we would need to bound a number of expressions involving integrals of modified Bessel functions.  Bounds of expressions of a similar form to those we would encounter are given in \cite{gaunt uniform}, and were used to bound the derivatives of the solution of the variance-gamma Stein equation (see \cite{gaunt vg} and \cite{dgv15}).  However, since we do not use the GH Stein equation to prove any approximation results in this paper, we omit this analysis.  
\end{remark}

\noindent\emph{Proof of Proposition \ref{ghds}.} \emph{Necessity}.  Suppose that $W\sim GH(\lambda,\alpha,\beta,\delta,0)$.  For convenience, we let $A(x)=\frac{x^2+\delta^2}{x}$, $B(x)=2\lambda+\frac{2\beta\delta^2}{x}-\frac{\delta^2}{x^2}$ and $C(x)=2\lambda\beta-(\alpha^2-\beta^2)x+\frac{\beta^2\delta^2}{x}-\frac{\beta\delta^2}{x^2}$.  By two applications of integration by parts, we obtain
\begin{align}&\mathbb{E}[\mathcal{A}_{\lambda,\alpha,\beta,\delta}f(W)]=\mathbb{E}[A(W)f''(W)+B(W)f'(W)+C(W)f(W)]\nonumber\\
&=\int_{-\infty}^\infty \big\{A(x)f''(x)+B(x)f'(x)+C(x)f(x)\big\}p(x)\,\mathrm{d}x\nonumber \\
&=\int_{-\infty}^\infty\big\{A(x)p''(x)+(2A'(x)-B(x))p'(x)+(A''(x)-B'(x)+C(x))p(x)\big\}f(x)\,\mathrm{d}x\nonumber \\
\label{ibp1}&\quad+\Big[A(x)p(x)f'(x)\Big]_{-\infty}^\infty+\Big[\big\{B(x)p(x)-(A(x)p(x))'\big\}f(x)\Big]_{-\infty}^\infty.
\end{align}
Note the conditions imposed on $f$ ensure that the above expectation exists.  Straightforward calculations show that
\begin{eqnarray*}2A'(x)-B(x)&=&-2(\lambda-1)-2\beta x-\frac{2\beta\delta^2}{x}-\frac{\delta^2}{x^2}, \\
A''(x)-B'(x)+C(x)&=&2(\lambda-1)\beta-(\alpha^2-\beta^2)x+\frac{\beta^2\delta^2}{x}+\frac{\beta\delta^2}{x^2}.
\end{eqnarray*}
Therefore, by Corollary \ref{cor1}, the integral of equation (\ref{ibp1}) is equal to 0.  We also have that
\begin{eqnarray*}A(x)p(x)f'(x)&=&\frac{x^2+\delta^2}{x}p(x)f'(x),\\
\big\{B(x)p(x)-(A(x)p(x))'\big\}f(x)&=&\big\{B(x)p(x)-A'(x)p(x)-A(x)p'(x)\big\}f(x) \\
&=&\bigg[\bigg(2\lambda-1+\frac{2\beta\delta^2}{x}\bigg)p(x)-\frac{x^2+\delta^2}{x}p'(x)\bigg]f(x).
\end{eqnarray*}
The condition (\ref{fcond}) for $f$ and Lemma \ref{lem0} imply that $\lim_{|x|\rightarrow\infty}xp(x)f'(x)=\lim_{|x|\rightarrow\infty}p(x)f(x)=\lim_{|x|\rightarrow\infty}xp'(x)f(x)=0$, meaning that the final two terms of (\ref{ibp1}) are equal to 0.  This completes the proof of necessity.

\emph{Sufficiency}. Let $z\in\mathbb{R}$ and consider the $GH(\lambda,\alpha,\beta,\delta,0)$ Stein equation with test function $\mathbf{1}_{(0,z]}(x)$: 
\begin{align}\label{bcwhv}\mathcal{A}_{\lambda,\alpha,\beta,\delta}f(x)=\mathbf{1}_{(0,z]}(x)-F(z),
\end{align}
where $F(\cdot)$ is the cumulative distribution function of $Z\sim GH(\lambda,\alpha,\beta,\delta,0)$.  The solution to (\ref{bcwhv}) is given by (\ref{ink}) with $\tilde{h}(x)$ replaced by $\mathbf{1}_{(0,z]}(x)-F(z)$.  This solution is clearly piecewise twice continuously differentiable.  By Lemma \ref{solnlemma}, we have that the solution and its first derivative are bounded.  Also, rearranging the Stein equation (\ref{steinsing}) gives that
\begin{align}f_z''(x)&=\frac{x}{x^2+\delta^2}\bigg\{\bigg[2\lambda+2\beta x+\frac{2\beta\delta^2}{x}-\frac{\delta^2}{x^2}\bigg]f_z'(x) \nonumber\\
\label{feqn2}&\quad+\bigg[2\lambda\beta-(\alpha^2-\beta^2)x+\frac{\beta^2\delta^2}{x}-\frac{\beta\delta^2}{x^2}\bigg]f_z(x)+\mathbf{1}_{(0,z]}(x)-F(z)\bigg\}.
\end{align}
Since $f_z$, $f_z'$ and are bounded for all $x\in\mathbb{R}$, we have from (\ref{feqn2}) that $\lim_{|x|\rightarrow\infty}f_z''(x)=(\alpha^2-\beta^2)\lim_{|x|\rightarrow\infty}f_z(x)$, meaning that $f''(x)$ is bounded as $|x|\rightarrow\infty$.  Therefore $f_z$ satisfies condition (\ref{fcond}).  As $f_z$ satisfies (\ref{bcwhv}), it follows that $\mathbb{E}|\mathcal{A}_{\lambda,\alpha,\beta,\delta}f_z(Z)|<\infty$.  Hence, if (\ref{ezero}) holds for all piecewise twice continuously differentiable functions satisfying (\ref{fcond}) and such that $\mathbb{E}|\mathcal{A}_{\lambda,\alpha,\beta,\delta}f_z(Z)|<\infty$, then by (\ref{bcwhv}), we have that, for any $z\in\mathbb{R}$,
\begin{align*}0=\mathbb{E}[\mathcal{A}_{\lambda,\alpha,\beta,\delta}f(W)]=\mathbb{P}(W\leq z)-F(z),
\end{align*}
and so $\mathcal{L}(W)=GH(\lambda,\alpha,\beta,\delta,0)$. \hfill $\square$

\subsection{Limiting cases of the generalized hyperbolic Stein equation}

Here, we note a number of interesting limiting cases of the GH Stein equation (\ref{steinsing}).  In doing so, we note that many of the Stein equations from the current literature are limiting cases of (\ref{steinsing}).  For ease of presentation, we mostly present Stein operators, rather than Stein equations.

\vspace{2mm}

\noindent{\bf{Variance-gamma distribution.}}  Taking the limit $\delta\rightarrow0$ and setting $\nu=\lambda-1/2$ in the $GH(\lambda,\alpha,\beta,\delta,0)$ Stein operator (\ref{steinsing}) gives the following Stein operator for $\mathrm{VG}_2(\nu,\alpha,\beta,0)$ distribution:
\begin{equation} \label{muggers} \mathcal{A}f(x)=xf''(x) + (2\nu + 1 +2\beta x)f'(x) + ((2\nu+1)\beta - (\alpha^2 -\beta^2)x)f(x), 
\end{equation}
which we recognise as the $\mathrm{VG}_2(\nu,\alpha,\beta,0)$ Stein operator that was obtained  by \cite{gaunt vg}.  A Stein operator for the $\mathrm{VG}_1(r,\theta,\sigma,0)$ distribution how follows on changing variables using (\ref{vgpar}) and then multiplying through by $\sigma^2$:
\begin{equation} \label{nice}\mathcal{A}f(x)= \sigma^2 xf''(x) + (\sigma^2 r + 2\theta x)f'(x) +(r\theta -  x)f(x).
\end{equation}
A Stein equation for the $\mathrm{VG}_1(r,\theta,\sigma,0)$ distribution is therefore given by
\begin{equation*}\sigma^2 xf''(x) + (\sigma^2 r + 2\theta x)f'(x) +(r\theta -  x)f(x)=h(x)-\mathbb{E}h(X),
\end{equation*}
where $X\sim\mathrm{VG}_1(r,\theta,\sigma,0)$.  The solution of this Stein equation and its first derivative are bounded if $h$ is bounded (see \cite{gaunt vg}, Lemma 3.3), and it was shown by \cite{dgv15} that the $n$-th derivative of the solution is bounded if all derivatives of $h$ up to $(n-1)$-th order are bounded.

As was noted by \cite{gaunt vg}, there are number of interesting special and limiting cases of the Stein operator (\ref{nice}).  Letting $r=2s$, $\theta=(2\lambda)^{-1}$, $\mu=0$ and taking the limit $\sigma\rightarrow 0$ in (\ref{nice}) and then multipying through by $\lambda$ gives the Stein operator
\[\mathcal{A}f(x)=xf'(x) +(s - \lambda x)f(x), \]
which is the classical $\Gamma(s,\lambda)$ Stein operator (see \cite{diaconis} and \cite{luk}).  

We also note that a Stein operator for the $\mathrm{VG}_1(r,0,\sigma/\sqrt{r},0)$ distribution is
\[\mathcal{A}f(x)=\frac{\sigma^2}{r}xf''(x)+\sigma^2f'(x)-xf(x),\]
which in the limit $r\rightarrow\infty$ is the classical $N(0,\sigma^2)$ Stein operator. 

Since $\mathrm{Laplace}(0,\sigma)\stackrel{\mathcal{D}}{=}\mathrm{VG}_1(2,0,\sigma,0)$, we have the following Stein operator for the Laplace distribution:
\begin{equation*}\mathcal{A}f(x)=\sigma^2xf''(x)+2\sigma^2f'(x)-xf(x),
\end{equation*}
although note that this Stein operator is different from the Laplace Stein operator of \cite{pike}.  

Finally, taking $r=1$, $\theta=0$, $\sigma=\sigma_X\sigma_Y$ and $\mu=0$ in (\ref{nice}) gives the following Stein operator for distribution of the product of independent $N(0,\sigma_X^2)$ and $N(0,\sigma_Y^2)$ random variables (see part (iii) of Proposition 1.3 of \cite{gaunt vg}):
\begin{equation*}\mathcal{A}f(x)=\sigma_X^2\sigma_Y^2xf''(x) + \sigma_X^2\sigma_Y^2f'(x) -xf(x) ,
\end{equation*} 
which is in agreement with the Stein equation for the product of two independent, zero mean normal random variables that was obtained by \cite{gaunt pn}.

\vspace{2mm}

\noindent{\bf{Student's $t$ distribution.}}  Recall that the scaled and shifted Student's $t$-distribution with $\nu$ degrees of freedom, with density (\ref{fancy}) is a limiting case of the $GH(\lambda,\alpha,\beta,\delta,\mu)$ distribution with $\lambda=-\frac{\nu}{2}<0$ and $\alpha,\beta\rightarrow0$.  A Stein operator for this distribution (with $\mu=0$) can be obtained by taking these parameter values in the Stein operator (\ref{steinsing}):
\begin{equation*}\mathcal{A}f(x)=\frac{x^2+\delta^2}{x}f''(x)+\bigg(-\nu-\frac{\delta^2}{x^2}\bigg)f'(x).
\end{equation*}
Setting $f'(x)=xg(x)$ gives another Stein operator which has no singularity:
\begin{equation*}\mathcal{A}g(x)=(x^2+\delta^2)g'(x)-(\nu-1)xg(x).
\end{equation*}
Finally, applying a shift by $\mu$ gives a Stein operator the scaled and shifted Student's $t$-distribution with density (\ref{fancy}):
\begin{equation}\label{tstein}\mathcal{A}g(x)=((x-\mu)^2+\delta^2)g'(x)-(\nu-1)(x-\mu)g(x),
\end{equation}
which is in agreement with the Stein operator of \cite{dgv15} for the scaled and shifted Student's $t$-distribution.  In the special case $\delta=\sqrt{\nu}$, $\mu=0$, we obtain the Stein operator for Student's $t$-distribution that was obtained by \cite{schoutens}.  It is interesting to note that the solution of the Stein equation corresponding to (\ref{tstein}) is bounded if the test function $h$ is bounded, and that its $n$-th derivative is bounded if $\nu-2(n-1)>0$ and the derivatives of $h$ up to $(n-1)$-th order are bounded (see \cite{dgv15}).
 
\vspace{2mm}

\noindent{\bf{GIG distribution.}} Recall that the $GIG(\lambda,a,b)$ distribution arises as a limiting case of the $GH(\lambda,\alpha,\beta,\delta,\mu)$ distribution when we take $\beta=\alpha-\frac{a}{2}$, $\alpha\rightarrow\infty$, $\delta\rightarrow0$, $\alpha\delta^2\rightarrow b$ and $\mu=0$.  To obtain a Stein operator for the GIG distribution, we first consider the $GH(\lambda,\alpha,\beta,\delta,0)$ Stein operator applied to $f(x)=xg(x)$.  Using that $f'(x)=xg'(x)+g(x)$, $f''(x)=xg''(x)+2g'(x)$ and simplifying gives:
\begin{align*}\mathcal{A}g(x)&=(x^2+\delta^2)g''(x)+\bigg(2(\beta+1)x^2+2\lambda x+2(\beta+1)\delta^2-\frac{\delta^2}{x}\bigg)g'(x) \\
&\quad+\bigg(-(\alpha^2-\beta^2)x^2+2(\lambda+1)\beta x+2\lambda+\beta^2\delta^2+\frac{\beta\delta^2}{x}-\frac{\delta^2}{x^2}\bigg)g(x).
\end{align*}
In anticipation of letting $\alpha$ and $\beta$ tend to infinity, we divide through by $\beta$ to get another Stein operator:
\begin{align}\mathcal{A}g(x)&=\frac{x^2+\delta^2}{\beta}g''(x)+\bigg(2\bigg(1+\frac{1}{\beta}\bigg)x^2+\frac{2\lambda}{\beta} x+2\bigg(1+\frac{1}{\beta}\bigg)\delta^2-\frac{\delta^2}{\beta x}\bigg)g'(x)\nonumber \\
\label{giggg}&\quad+\bigg(-\frac{\alpha^2-\beta^2}{\beta}x^2+2(\lambda+1) x+\frac{2\lambda}{\beta}+\beta\delta^2+\frac{\delta^2}{x}-\frac{\delta^2}{\beta x^2}\bigg)g(x).
\end{align}
If $\beta=\alpha-\frac{a}{2}$, then $\alpha^2-\beta^2=a\beta+a^2/4$.  Now, taking $\beta=\alpha-\frac{a}{2}$, $\alpha\rightarrow\infty$, $\delta\rightarrow0$, $\alpha\delta^2\rightarrow b$ in (\ref{giggg}) gives the following Stein operator for the $GIG(\lambda,a,b)$ distribution:
\begin{equation}\label{steingi}\mathcal{A}g(x)=2x^2g'(x)+(-ax^2+2(\lambda+1)x+b)g(x),
\end{equation}
which we recognise as the $GIG(\lambda,a,b)$ Stein operator of \cite{koudou} (up to a multiple of 2).  The corresponding Stein equation is therefore
\begin{equation}\label{finl}2x^2g'(x)+(-ax^2+2(\lambda+1)x+b)g(x)=h(x)-\mathbb{E}h(X),
\end{equation}
where $X\sim GIG(\lambda,a,b)$.  To date, no bounds have been obtained for solution of the general $GIG(\lambda,a,b)$ Stein equation (\ref{finl}).  However, the limiting case $a\rightarrow0$, which is the inverse-gamma distribution, was treated by \cite{dgv15}.  They showed that the solution of the Stein equation (with $a=0$) is bounded if $\lambda<-1$ and the test function $h$ is bounded, and that the $n$-th derivative of the solution is bounded if $\lambda<-(2n+1)$ and the $n$-th derivative of $h$ exists and is bounded.

It actually turns out that solving the $GIG(\lambda,a,b)$ Stein equation (\ref{finl}) and then bounding the solution is quite straightforward.  This is because the density $p$, as given by (\ref{gigpdf}), satisfies the differential equation
\begin{equation*}(s(x)p(x))'=\tau(x)p(x),
\end{equation*}
where $s(x)=2x^2$ and $\tau(x)=-ax^2+2(\lambda+1)x+b$, and the Stein equation (\ref{finl}) can be written as $s(x)g'(x)+\tau(x)g(x)=h(x)-\mathbb{E}h(X)$.  Therefore, by Proposition 1  of \cite{schoutens}, we have that the Stein equation (\ref{finl}) has solution
\begin{align*}g(x)&=\frac{1}{2x^2p(x)}\int_0^x [h(t)-\mathbb{E}h(X)]p(t)\,\mathrm{d}t\\
&=-\frac{1}{2x^2p(x)}\int_x^\infty [h(t)-\mathbb{E}h(X)]p(t)\,\mathrm{d}t.
\end{align*}
Now, let $l=\mathbb{E}X=\frac{\sqrt{b}}{\sqrt{a}}\frac{K_{\lambda+1}(\sqrt{ab})}{K_\lambda(\sqrt{ab})}$ (see \cite{eberlein}).  Then, if $h$ is bounded, we can use Lemma 1 of \cite{schoutens0} to obtain the following bound for the solution:
\begin{equation*}\|g\|\leq\frac{1}{2l^2p(l)}\|h-\mathbb{E}h(X)\|.
\end{equation*}
Therefore, the solutions of the variance-gamma, Student's $t$ and GIG Stein equations that arise from the GH Stein equation (\ref{steinsing}) are all bounded provided the test function $h$ is bounded.

\subsection{Applications of Proposition \ref{ghds}}

The most common use of Stein characterisations of probability distributions is to identify an appropriate Stein equation for that distribution, which is then applied through Stein's method to prove approximation theorems.  However, as Stein characterisations characterise probability distributions, we can use them to infer various distributional properties, such as moments of distributions (see \cite{gaunt vg}), moment generating and characteristic functions (see \cite{gaunt pn} and \cite{gaunt vg}), and formulas for probability density functions (see \cite{gaunt ngb}, in which a new formula was established for the density of the distribution of the mixed product of independent beta, gamma and centred normal random variables).  We consider one such example here.

Suppose $W\sim GH(\lambda,\alpha,\beta,\delta,0)$.  Then taking $f(x)=x^k$ (note that $f$ satisfies the conditions of Proposition \ref{ghds}) and setting $M_k=\mathbb{E}W^k$ in the $GH(\lambda,\alpha,\beta,\delta,0)$  characterising equation (\ref{ezero}) leads to the following recurrence relation for the moments of the $GH(\lambda,\alpha,\beta,\delta,0)$ distribution:
\begin{align}(\alpha^2-\beta^2)M_{k+1}&=2\lambda\beta M_k+\big(k(k-1)+2\lambda k_\beta\delta^2\big)M_{k-1}+(2k-1)\beta\delta^2M_{k-2}\nonumber\\
\label{reqeqn}&\quad+k(k-2)\delta^2M_{k-3}.
\end{align}
We have that $M_0=1$, $M_1$ is given by (\ref{mean}), $M_2$ can be read off from (\ref{variance}), and a short calculation using the moment generating function (\ref{ghdmgf}) can be used to commute $M_3$.  Using forward substitution in (\ref{reqeqn}), one can then obtain formulas for moments of arbitrary order of the $GH(\lambda,\alpha,\beta,\delta,0)$ distribution.  The recurrence relation (\ref{reqeqn}) appears to be new, although it should be noted that \cite{scott} have already established a formula for the moments of general order of the GH distribution.

\appendix

\section{Elementary properties of modified Bessel functions}

Here we list standard properties of modified Bessel functions that are used throughout this paper.  All these formulas can be found in \cite{olver}.

The \emph{modified Bessel function of the first kind} of order $\nu \in \mathbb{R}$ is defined, for $x\in\mathbb{R}$, by
\begin{equation*}\label{defI}I_{\nu} (x) = \sum_{k=0}^{\infty} \frac{1}{\Gamma(\nu +k+1) k!} \left( \frac{x}{2} \right)^{\nu +2k}.
\end{equation*}
The \emph{modified Bessel function of the second kind} of order $\nu\in\mathbb{R}$ is defined, for $x>0$, by
\begin{equation*}K_\nu(x)=\int_0^\infty \mathrm{e}^{-x\cosh(t)}\cosh(\nu t)\,\mathrm{d}t.
\end{equation*}
The modified Bessel functions $K_\nu$ and $I_\nu$ form a pair of linearly independent solutions to the following differential equation:
\begin{equation} \label{realfeel} x^2 f''(x) + xf'(x) - (x^2 +\nu^2)f(x) =0.
\end{equation}
The modified Bessel function of the second kind is symmetric in the index parameter $\nu$:
\begin{eqnarray} \label{spheress} K_{\nu}(x)&=&K_{-\nu}(x), \quad \forall \nu\in\mathbb{R}, \\
 \label{sphere} K_{1/2}(x)&=&K_{-1/2}(x)=\sqrt{\frac{\pi}{2x}} \mathrm{e}^{-x}.
\end{eqnarray}
The modified Bessel functions have the following asymptotic behaviour.  Let $a_0(\nu)=1$ and
\begin{equation}\label{aknu}a_k(\nu)=\frac{(4\nu^2-1^2)(4\nu^2-3^2)\cdots(4\nu^2-(2k-1)^2)}{k!8^k}, \quad k=1,2,3,\ldots.
\end{equation}
Then
\begin{eqnarray}\label{Ktend0}K_{\nu} (x) &\sim& \begin{cases} 2^{|\nu| -1} \Gamma (|\nu|) x^{-|\nu|}, &  x \downarrow 0, \: \nu \not= 0, \\
-\log x, &  x \downarrow 0, \: \nu = 0, \end{cases} \\
\label{Ktendinfinity} K_{\nu} (x) &\sim& \sqrt{\frac{\pi}{2x}} \mathrm{e}^{-x}\sum_{k=0}^\infty\frac{a_k(\nu)}{x^k}, \quad x \rightarrow \infty, \\
\label{roots} I_{\nu} (x) &\sim& \frac{\mathrm{e}^x}{\sqrt{2\pi x}}\sum_{k=0}^\infty(-1)^k\frac{a_k(\nu)}{x^k}, \quad x \rightarrow \infty. 
\end{eqnarray}

\section*{Acknowledgements}The author acknowledges support from EPSRC
grant EP/K032402/1 and is currently supported by a Dame Kathleen Ollerenshaw Research Fellowship.  The author would like to thank the referees for their comments and is particularly grateful to one referee who spotted several errors in the displayed equations.


\begin{thebibliography}{99}

\bibitem{aaps16} Arras, B., Azmoodeh, E., Poly, G. and Swan, Y.  Stein's method on the second Wiener chaos : 2-Wasserstein distance. arXiv:1601:03301, 2016.

\bibitem{barbour2} Barbour, A. D. Stein's method for diffusion approximations.  \emph{Probab. Theory
Rel.} $\mathbf{84}$ (1990), pp. 297--322.

\bibitem{bhj92} Barbour, A. D., Holst, L. and Janson, S. \emph{Poisson Approximation}. Oxford University Press, Oxford, 1992.

\bibitem{barndorff} Barndorff-Nielsen, O. E.  Exponentially decreasing distributions for the logarithm of particle size. \emph{Proc. Roy. Soc. London A} $\mathbf{353}$ (1977), pp. 401--419.

\bibitem{bks82}  Barndorff-Nielsen, O. E., Kent, J. and S{\o}rensen, M. (1982): Normal Variance-Mean Mixtures and $z$-Distributions. \emph{Int. Stat. Rev.} $\mathbf{50}$ (1982), pp. 145–-159.

\bibitem{bibby} Bibby, B. M. and S{\o}rensen, M.  Hyperbolic Processes in Finance.  In: Rachev, S. (Ed.), \emph{Handbook
of Heavy Tailed Distributions in Finance.} Elsevier Science, Amsterdam (2003), pp. 211--248. 

\bibitem{chatterjee} Chatterjee, S., Fulman, J. and  R\"ollin, A. Exponential approximation by Stein's method and spectral graph theory.
\emph{ALEA Lat. Am. J. Probab. Math. Stat.} $\mathbf{8}$ (2011), pp. 197--223.

\bibitem{cs11}  Chatterjee, S. and Shao, Q.--M.  Nonnormal approximation by Stein's method of exchangeable pairs with application to the
Curie-Weiss model. \emph{Ann. Appl. Probab.} $\mathbf{21}$ (2011), pp. 464--483.

\bibitem{chen 0} Chen, L. H. Y.  Poisson approximation for dependent trials.  \emph{Ann. Probab.} $\mathbf{3}$ (1975), pp. 534--545.

\bibitem{chen} Chen, L. H. Y., Goldstein, L. and Shao, Q--M.  \emph{Normal Approximation by Stein's Method.} Springer, 2011.

\bibitem{collins} Collins, P. J.  \emph{Differential and Integral Equations.}  Oxford University Press, 2006.

\bibitem{diaconis} Diaconis, P. and Zabell, S.  Closed Form Summation for Classical Distributions: Variations on a Theme of De Moivre. \emph{Statist. Sci.} $\mathbf{6}$ (1991), pp. 284--302.

\bibitem{dobler beta} D\"{o}bler, C. Stein's method of exchangeable pairs for the beta distribution and generalizations. \emph{Electron. J. Probab.} $\mathbf{20}$ no. 109 (2015), pp. 1--34.

\bibitem{dobler} D\"{o}bler, C. Distributional transformations without orthogonality relations. \emph{J. Theor. Probab.} $\mathbf{30}$ (2017), pp 85--116.

\bibitem{dgv15} D\"{o}bler, C, Gaunt, R. E. and Vollmer, S. J.  An iterative technique for bounding derivatives of solutions of Stein equations.  arXiv:1510:02623, 2015.

\bibitem{eberlein} Eberlein, E. and Hammerstein E.  Generalized Hyperbolic and Inverse Gaussian Distributions: Limiting Cases and Approximation of Processes.  in:  Dalang, R. C. Dozzi, M. Russo, F. (Eds.), Seminar on Stochastic Analysis, Random
Fields and Applications IV, in: \emph{Progress in Probability} $\mathbf{58}$ Birkh\"{a}user Verlag, (2004), pp. 105--153.

\bibitem{eberlein1} Eberlein, E. and Keller, U.  Hyperbolic distributions in finance. \emph{Bernoulli} $\mathbf{1}$ (1995), pp. 281--299.

\bibitem{eberlein2} Eberlein, E. and Prause, K.  The generalized hyperbolic model: financial derivatives and risk measures. In: Geman, H., Madan, D. B., Pliska, S. and Vorst, T. Eds., \emph{Mathematical finance -- Bachelier Congress 2000} Springer, Berlin, (2001), pp. 245--267.

\bibitem{em14} Eichelsbacher, P. and Martschink, B.  Rates of Convergence in the Blume-Emery-Griffiths Model. \emph{J. Stat. Phys.} $\mathbf{154}$ (2014), pp. 1483–-1507.

\bibitem{eichelsbacher} Eichelsbacher, P. and Th\"{a}le, C.  Malliavin-Stein method for Variance-Gamma approximation on Wiener space.  	\emph{Electron. J. Probab.} $\mathbf{20}$ no. 123 (2015), pp. 1--28.

\bibitem{gaunt vg} Gaunt, R. E.  Variance-Gamma approximation via Stein's method.  \emph{Electron. J. Probab.} $\mathbf{19}$ no. 38, pp. 1--33.

\bibitem{gaunt normal} Gaunt, R. E. Stein's method for functions of multivariate normal random variables. arXiv:1507.08688, 2015.

\bibitem{gaunt uniform} Gaunt, R. E.  Uniform bounds for expressions involving modified Bessel functions.  \emph{Math. Inequal. Appl.} $\mathbf{19}$ (2016), pp. 1003--1012.

\bibitem{gaunt pn} Gaunt, R. E. On Stein's method for products of normal random variables and zero bias couplings. To appear in \emph{Bernoulli}, 2016+.

\bibitem{gaunt ngb} Gaunt, R. E.  Products of normal, beta and gamma random variables: Stein operators and distributional theory.  To appear in \emph{Braz. J. Probab. Stat.}, 2016+.

\bibitem{gaunt chi square} Gaunt, R. E., Pickett, A. and Reinert, G.  Chi-square approximation by Stein's method with application to Pearson's statistic.  To appear in \emph{Ann. Appl. Probab.}, 2016+.

\bibitem{gaunt friedman} Gaunt, R. E. and Reinert, G.  The rate of convergence of some asymptotically chi-square distributed statistics by Stein's method. arXiv:1603:01889, 2016.

\bibitem{goldstein4} Goldstein, L. and Reinert, G.  Stein's method for the Beta distribution and the P\'{o}lya-Eggenberger Urn.  \emph{J. Appl. Probab.} $\mathbf{50}$ (2013), pp. 1187--1205.

\bibitem{gotze} G\"{o}tze, F.  On the rate of convergence in the multivariate CLT.  \emph{Ann. Probab.} $\mathbf{19}$ (1991), pp. 724--739.

\bibitem{koudou} Koudou, A. E. and Ley, C. Characterizations of GIG laws: a survey complemented with two new results.  \emph{Probab. Surv.} $\mathbf{11}$ (2014), pp. 161--176.

\bibitem{ley} Ley, C., Reinert, G. and Swan, Y.  Stein's method for comparison of univariate distributions. \emph{Probab. Surv.} (2017), pp. 1--52.

\bibitem{luk} Luk, H.  \emph{Stein's Method for the Gamma Distribution and Related Statistical Applications.}  PhD thesis, University of Southern California, 1994. 

\bibitem{mackey} Mackey, L., Jordan, M., I., Chen, R. Y., Farrell, B. and Tropp, J. A.  Matrix concentration inequalities via the method of exchangeable pairs. \emph{Ann. Probab.} $\mathbf{42}$ (2014), pp. 906--945.

\bibitem{madan} Madan, D. B. and Seneta, E. The Variance Gamma (V.G.) Model for Share Market Returns.  \emph{J. Bus.} $\mathbf{63}$ (1990), pp. 511--524.

\bibitem{olver} Olver, F. W. J., Lozier, D. W., Boisvert, R. F. and Clark, C. W.  \emph{NIST Handbook of Mathematical
Functions.} Cambridge University Press, 2010.

\bibitem{pekoz1} Pek\"oz, E. and R\"ollin, A. New rates for exponential approximation and the theorems of R\'{e}nyi and Yaglom. \emph{Ann. Probab.} $\mathbf{39}$ (2011) pp. 587--608.

\bibitem{pekoz} Pek\"oz, E., R\"ollin, A. and Ross, N. Degree asymptotics with rates for preferential attachment random graphs. \emph{Ann. Appl. Probab.} $\mathbf{23}$ (2013), pp. 1188--1218.

\bibitem{pekoz3} Pek\"oz, E., R\"ollin, A. and Ross, N.  Generalized gamma approximation with rates for urns, walks and trees. \emph{Ann. Probab.} $\mathbf{44}$ (2016), pp. 1776-1816.

\bibitem{pike} Pike, J. and Ren, H. Stein's method and the Laplace distribution. \emph{ALEA Lat. Am. J. Probab. Math. Stat.} $\mathbf{11}$ (2014), pp. 571--587.

\bibitem{ross} Ross, N. Fundamentals of Stein's method.  \emph{Probab. Surv.} $\mathbf{8}$ (2011), pp. 210-293.

\bibitem{schoutens0} Schoutens, W. Orthogonal Polynomials in Steins Method. EURANDOM Report 99-041, EURANDOM, 1999.

\bibitem{schoutens} Schoutens, W.  Orthogonal polynomials in Stein's method.  \emph{J. Math. Anal. Appl.} $\mathbf{253}$ (2001), pp. 515--531.

\bibitem{scott} Scott, D. J., W\"urtz, D. Dong, C. and Tran, T. T.  Moments of the
generalized hyperbolic distribution.
\emph{Computation. Stat.} $\mathbf{26}$ (2011), pp. 459--476.

\bibitem{stein} Stein, C.  A bound for the error in the normal approximation to the the distribution of a sum of dependent random variables.  In \emph{Proc. Sixth Berkeley Symp. Math. Statis. Prob.} (1972), vol. 2, Univ. California Press, Berkeley, pp. 583--602.

\bibitem{stein2}Stein, C.  \emph{Approximate Computation of Expectations.} IMS, Hayward, California, 1986.

\end{thebibliography}
\end{document}